\newtheorem*{rep@theorem}{\rep@title}
\newcommand{\newreptheorem}[2]{%
\newenvironment{rep#1}[1]{%
 \def\rep@title{#2 \ref{##1}}%
 \begin{rep@theorem}}%
 {\end{rep@theorem}}}
\newtheorem{theorem}{Theorem}
\newtheorem{proposition}[theorem]{Proposition}
\newtheorem{corollary}[theorem]{Corollary}
\newtheorem*{theorem*}{Theorem}
\newtheorem*{proposition*}{Proposition}
\newtheorem*{questions*}{Questions}
\newtheorem{definition}[theorem]{Definition}
\newtheorem{example}[theorem]{Example}
\newtheorem{question}{Question}
\newtheorem{remark}[theorem]{Remark}
\newcommand{\lnk}{\operatorname{lk}}
\newcommand{\sgn}{\operatorname{sign}}
\newcommand{\C}{\mathcal{C}}
\newcommand{\N}{\mathbb{N}}
\newcommand{\Z}{\mathbb{Z}}
\renewcommand{\c}{c}
\newcommand{\bdry}{\partial}
\newcommand{\Arf}{\operatorname{Arf}}
\newcommand{\WH}{\operatorname{WH}}
\begin{document}

\title[Links admitting Homeomorphic C-complexes]{When do links admit homeomorphic $C$-complexes?}

\author{Grant Roth}
\address{Department of Mathematics, University of Wisconsin-Eau Claire, Hibbard Humanities Hall 508,  Eau Claire WI 54702-4004}
\email{rothgm@uwec.edu}

\author{Christopher William Davis}
\address{Department of Mathematics, University of Wisconsin-Eau Claire, Hibbard Humanities Hall 508,  Eau Claire WI 54702-4004}
\email{daviscw@uwec.edu}

\date{\today}

\subjclass[2010]{}

\keywords{}

\begin{abstract} 
Any two knots admit orientation preserving homeomorphic Seifert surfaces, as can be seen by stabilizing.  There is a generalization of a Seifert surface to the setting of links called a C-complex.  In this paper, we ask when two links will admit orientation preserving homeomorphic C-complexes.  In the case of 2-component links, we find that the pairwise linking number provides a complete obstruction.  In the case of links with 3 or more components and zero pairwise linking number, Milnor's triple linking number provides a complete obstruction.
\end{abstract}

\maketitle

\section{Introduction}

In the 1930's, Seifert \cite{Seifert35}  introduced  the study of a knot $K$ via a compact connected oriented surface now called a \textbf{Seifert surface} bounded by $K$.  While any knot admits many different Seifert surfaces, the study of any Seifert surface for a fixed knot results in interesting invariants.  For example, Seifert surfaces are used to compute The Alexander module \cite{Alexander28}, the Levine-Tristram signature function \cite{L5}, and the Conway polynomial \cite{Kauffman81}.  See also \cite{NS03, Seifert35, Seifert50}. 
 
 In \cite{Cooper82}, Cooper  defines a generalization of a Seifert surface called a \textbf{C-complex} (or {clasp-complex}).  Informally, a C-complex is a collection of embedded surfaces in $S^3$ which might intersect each other in clasps.  An example is depicted in Figure~\ref{fig:Ccomplex}. Similar to a Seifert surface, these objects are not themselves invariants of links,  yet out of them many important invariants of links can be understood.  See for example, \cite{Cimasoni2004, CimFlo, Cooper82}.  A formal definition is given in Section~\ref{sect: C-complexes}.  

\begin{figure}
\centering
\includegraphics[width=14em]{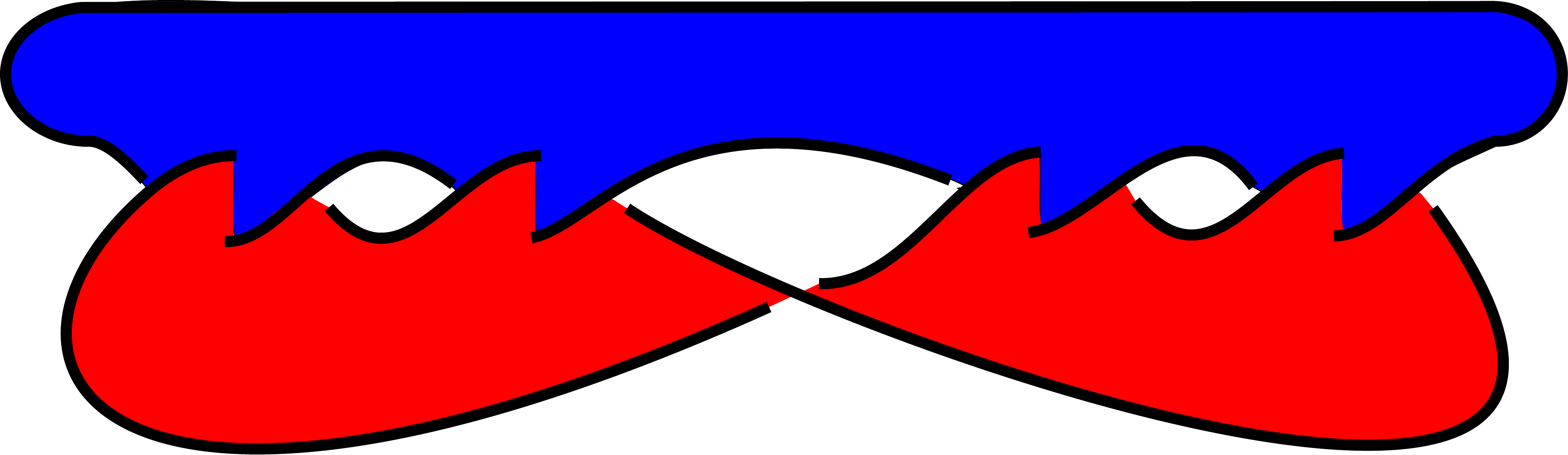}
\caption{An example of a C-complex consisting of two embedded disks which intersect each other in four clasps.}
\label{fig:Ccomplex}
\end{figure}

Notice that for any knots $K_1$ and $K_2$, if $K_1$ bounds a genus $g_1$ Seifert surface $F_1$, $K_2$ bounds a genus $g_2$ Seifert surface $F_2$, and $g_1<g_2$ then by stabilizing $F_1$ as in Figure~\ref{fig:Genus} enough times we create a new Seifert surface $F_1'$ for $K$ such that $g(F_1') = g(F_2)$.  Since the genus and the number of boundary components give a complete set of invariants of compact oriented connected surfaces we conclude that $K_1$ and $K_2$ bound homeomorphic Seifert surfaces.  

\begin{figure}
\begin{picture}(190,30)
\centering
\put(0,0){\includegraphics[height=2em]{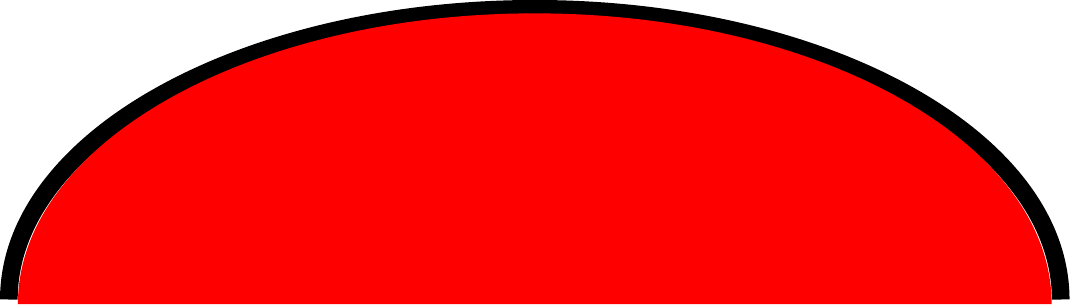}}
\put(90,5){\huge{$\rightsquigarrow$}}
\put(130,0){\includegraphics[height=2em]{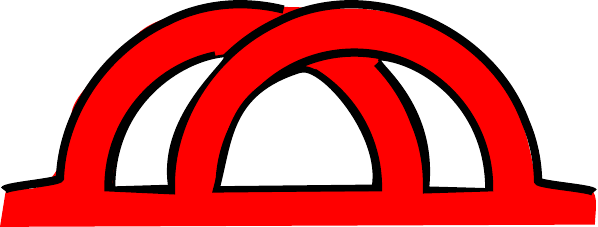}}
\end{picture}
\caption{Stabilizing to increase the genus of a surface.}
\label{fig:Genus}
\end{figure}

The goal of this paper is to ask when the same is true of C-complexes.  Given two $n$-component links $L$ and $J$, when do there exist C-complexes $F$ and $G$ for these links which are related by an orientation preserving homeomorphism?  To what extent is the homeomorphism type of a C-complex an invariant of the link?  We will call two C-complexes $F=F_1\cup \dots \cup F_n$ and $G=G_1\cup \dots \cup G_n$ \textbf{equivalent} if there is a homeomorphism $\Phi:F\to G$ which restricts to orientation preserving homeomorphisms from the components of $F$ to the components of $G$ and preserves the signs of the clasps.  See Definition~\ref{def: equiv} in Section~\ref{sect: C-complexes} for more detail.

%

Since the pairwise linking numbers of a link can be computed by counting clasps in a C-complex with sign, it is clear that the pairwise linking number is an obstruction to two links admitting equivalent C-complexes.  In the case of 2-component links linking number is the only obstruction.

\begin{theorem}\label{thm: 2-components}
Let $L=L_1 \cup L_2$ and $J=J_1 \cup J_2$ be 2-component links.  Then  $\lnk(L_1 , L_2) = \lnk(J_1,J_2)$ if and only if $L$ and $J$ admit equivalent C-complexes.
\end{theorem}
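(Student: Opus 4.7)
The forward direction is immediate: each clasp of a C-complex contributes $\pm 1$ to the linking number according to its sign, so the signed clasp count equals $\lnk(L_1, L_2)$, and this signed count is preserved by any equivalence of C-complexes.

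For the converse, the plan is to begin with arbitrary C-complexes $F = F_1 \cup F_2$ for $L$ and $G = G_1 \cup G_2$ for $J$ and then modify each through a sequence of normalization moves until the two become equivalent. Let $p_F, n_F$ and $p_G, n_G$ denote the numbers of positive and negative clasps in $F$ and $G$ respectively; the hypothesis on linking numbers gives $p_F - n_F = p_G - n_G$. Three kinds of modification are needed. First, stabilize the component surfaces of $F$ and $G$ by attaching handles disjoint from the clasps to bring the genera $g(F_i)$ and $g(G_i)$ into agreement for $i = 1,2$. Second, insert canceling pairs of clasps via a local move supported in a ball disjoint from the rest of the C-complex: the move pushes a finger of $L_1$ through $F_2$ twice with opposite signs and correspondingly modifies $F_2$, producing two new transverse clasp arcs of opposite sign while leaving the ambient isotopy type of the link unchanged. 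Iterating this move on $F$ and $G$ as needed, arrange $p_F = p_G$ and (automatically) $n_F = n_G$. Third, after enough stabilization, rearrange the cyclic order of clasp endpoints on each $\partial F_i$ and $\partial G_i$ into a fixed standard pattern depending only on the multiset of signs, using local moves that swap adjacent same-sign clasps via a slide through a newly created handle, and swap adjacent opposite-sign clasps by a clasp-pair exchange.

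After these normalizations, $F$ and $G$ consist of pairs of oriented surfaces of matching genus carrying matching signed patterns of clasp endpoints on their boundaries. A homeomorphism $\Phi_i : F_i \to G_i$ realizing the equivalence is then constructed in each component by choosing any orientation-preserving homeomorphism that matches the clasp endpoints on $\partial F_i$ to those on $\partial G_i$ in order and sign, and matches the interior clasp endpoints pairwise; such a homeomorphism exists because the mapping class group of an oriented surface with one boundary component acts transitively on configurations of a prescribed number of labeled points on the boundary together with a matching number of labeled points in the interior, once the genus is sufficiently large. The main obstacle I expect is the third normalization: the cyclic order of clasp endpoints on $\partial F_1 = L_1$ is determined by the intersections $L_1 \cap F_2$ inside $S^3$, so reordering them requires a careful construction of moves that slide endpoints past one another while preserving the surrounding link type. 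Combined with the canceling-clasp insertion move, these local modifications constitute the technical heart of the argument and must each be verified in detail.
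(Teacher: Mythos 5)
Your outline matches the paper's strategy (normalize clasp counts with canceling pairs, normalize genera by stabilization, normalize the clasp order, then invoke a classification of C-complexes by genus and clasp data), but the step you yourself flag as the technical heart is genuinely missing, and it is exactly the content the paper supplies. The paper's reordering tool is a single isotopy of the link (Figure~\ref{fig:clasppass}): pushing one clasp through an adjacent one transposes two consecutive letters of $\omega_1(F)$ while leaving $\omega_2(F)$ untouched, at the cost of raising $g(F_2)$ by one --- a cost that is harmless because genera are equalized afterwards by stabilization (Figure~\ref{fig:Genus}). This move works for any pair of adjacent clasps regardless of sign, so your case split into ``same-sign swaps via a new handle'' and ``opposite-sign swaps by a clasp-pair exchange'' is both unverified and unnecessary; in particular an opposite-sign adjacent pair in $\omega_1$ need not be geometrically a canceling pair (it need not even be adjacent in $\omega_2$), so an ``exchange'' move for it cannot be taken for granted.

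There is a second, subtler gap in your normalization target. Matching ``the signed pattern of clasp endpoints on each boundary'' separately is not enough for equivalence: the equivalence class of a two-component C-complex is determined (Proposition~\ref{prop:equiv C-comp}) by the genera together with the \emph{pair} of claspwords in the same labeled clasps, i.e.\ one must also match which clasp along $L_1$ corresponds to which clasp along $L_2$. Two C-complexes can have identical sign sequences on both boundary circles but differ by the permutation relating $\omega_1$ to $\omega_2$, and such C-complexes are not equivalent in general. The paper avoids this by using the move above to bring \emph{both} claspwords of each C-complex to the literal same word $c_1^{+1}\cdots c_m^{+1}c_{m+1}^{-1}\cdots c_k^{-1}$ with consistent labels (first reorder $\omega_1$ fixing $\omega_2$, then reorder $\omega_2$ fixing $\omega_1$), after which Proposition~\ref{prop:equiv C-comp} applies. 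Relatedly, your final homeomorphism must carry clasp \emph{arcs} to clasp arcs with matching signs, not merely match endpoint configurations; the transitivity statement you invoke (which needs no large-genus hypothesis) does not by itself guarantee this, whereas the paper's argument builds the map on an annular neighborhood of the boundary containing all clasps and then extends over the complementary once-punctured surface of the correct genus. With the Figure~\ref{fig:clasppass} move supplied and the normalization restated as equality of both claspwords, your argument closes up and coincides with the paper's proof.
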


In \cite{M2}, Milnor produced a family of \textbf{higher order linking numbers}.  For an $n$-component link $L$, we will be most interested in the \textbf{triple linking number} $\overline{\mu}_{ijk}(L)\in \Z$ (with $1\le i<j<k\le n$).    In \cite{MellorMelvin2003}, Mellor and Melvin give a means of computing Milnor's triple linking number from a C-complex.  We recall this result in Section \ref{sect:triple linking}.    Their formulation depends only on the equivalence class of a C-complex for the link.  Hence we see that triple linking number gives an obstruction to links admitting equivalent C-complexes.  Indeed, in the case of links with vanishing pairwise linking numbers, the triple linking numbers form a complete obstruction.



\begin{theorem}\label{thm:main}
Let $L$ and $J$ be $n$-component links with vanishing pairwise linking numbers.  Then the following are equivalent
\begin{enumerate}
\item For all $1\le i<j<k\le n$, $\overline{\mu}_{ijk}(L)=\overline{\mu}_{ijk}(J)$
\item $L$ and $J$ admit equivalent C-complexes
\item  There exist unknotted curves $\gamma_1,\dots,\gamma_k$ disjoint from $L$ such that $\lnk(L_i,\gamma_j)=0$ for all $i,j$ and $J$ is obtained from $L$ by some surgery on $\gamma_1,\dots,\gamma_k$.
\end{enumerate}
\end{theorem}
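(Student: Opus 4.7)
My plan is to prove the cyclic chain $(2)\Rightarrow(1)\Rightarrow(3)\Rightarrow(2)$.

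The implication $(2)\Rightarrow(1)$ should be immediate from the Mellor--Melvin formula recalled in Section~\ref{sect:triple linking}. That formula expresses $\overline{\mu}_{ijk}$ in terms of combinatorial data of a C-complex (signed clasp counts, surface genera, and the arrangement of clasp arcs on each surface) which is manifestly preserved under the equivalence of Definition~\ref{def: equiv}. So equivalent C-complexes force matching triple linking numbers.

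For $(3)\Rightarrow(2)$, I would begin with any C-complex $F = F_1\cup\cdots\cup F_n$ for $L$; existence follows from the vanishing of pairwise linking numbers. Each intersection $F_i\cap\gamma_j$ consists of finitely many signed points whose algebraic sum is $\lnk(L_i,\gamma_j)=0$. Pair opposite-sign intersections and tube them off along short arcs in $\gamma_j$, adding handles to $F_i$ but leaving the clasps and their signs untouched. The resulting C-complex $F'$ is disjoint from $\gamma_1\cup\cdots\cup\gamma_k$, so the surgery sending $L$ to $J$ fixes $F'$ setwise. Hence $F'$ realizes $J$ as the boundary of a C-complex in the same equivalence class as $F$, and no new clasps are introduced, so the signs and genera of the old clasps are preserved as required.

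The main obstacle is $(1)\Rightarrow(3)$: given vanishing pairwise linkings and matching triple linkings, I must construct the unknotted surgery curves. My plan is to use explicit Borromean-type unknots. For each triple $i<j<k$ and each integer $m$, one can place an unknot $\gamma$ in $S^3\setminus L$ with $\lnk(L_\ell,\gamma)=0$ for all $\ell$, such that $\pm 1$-framed surgery on $\gamma$ produces a new link in $S^3$ whose value of $\overline{\mu}_{ijk}$ is shifted by $m$ while all pairwise linkings and all other triple linkings remain unchanged. Applying finitely many such surgeries adjusts the triple linking invariants of $L$ to agree with those of $J$. The genuine subtlety is the final step: after all triple linkings agree, one must argue that the modified link actually equals $J$ up to further null-linking surgeries, not merely that it shares the same low-order Milnor invariants. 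I would attempt this by induction on the number of components, using Theorem~\ref{thm: 2-components} as the two-component base case and exploiting the standard-form C-complex produced by the Mellor--Melvin construction to read off an explicit collection of Borromean curves that account for any remaining discrepancy between the two C-complexes.
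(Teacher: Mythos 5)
There is a genuine gap, and it sits exactly where you flag the ``genuine subtlety'': your implication (1)$\Rightarrow$(3). Knowing that all $\overline\mu_{ijk}$ agree (and pairwise linking numbers vanish) does not let you conclude that, after shifting the triple linking numbers by Borromean-type surgeries, the modified link ``equals $J$ up to further null-linking surgeries.'' The two links can still differ by the knotting of individual components, by the Arf invariants $\Arf(L_i)$, by the Sato--Levine invariants $\overline\mu_{iijj}$, and by all higher-order Milnor data, and your proposed induction on the number of components has no mechanism for absorbing these: Theorem~\ref{thm: 2-components} only gives statement (2) for two-component (sub)links, not the surgery description (3) that your cyclic chain requires, and reading Borromean curves off a standard-form C-complex controls at most the degree-three invariants. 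The paper closes precisely this gap with an external classification result, Martin's theorem that the component Arf invariants, the Sato--Levine invariants, and the triple linking numbers form a complete set of obstructions to two links being related by band pass moves. Concretely: it first normalizes $\Arf$ by band-summing with trefoils (realized by $-1$ surgery on two null-linking unknots) and normalizes $\overline\mu_{iijj}$ by band-summing with twisted Whitehead links (realized by $-1/m$ surgery on a null-linking unknot), and then invokes Martin to connect the adjusted link to $J$ by band passes, each of which is $0$-surgery on a Hopf pair of curves having zero linking with the link. Without this theorem, or a substitute of comparable strength, your passage from ``same $\overline\mu_{ijk}$'' to ``related by null-linking surgery'' is unsupported.

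A smaller problem occurs in your (3)$\Rightarrow$(2) step. The arcs of $\gamma_j$ along which you tube cannot be taken ``short'' or disjoint from the rest of the complex: a tube following such an arc will in general pierce other components $F_\ell$, creating circles of intersection $F_i\cap F_\ell$, so the tubed object is no longer a C-complex, and your claim that the clasps and their signs are untouched is too strong. The paper repairs this with finger moves that convert each intersection circle into a pair of clasps, at the cost of replacing the original C-complex for $L$ by a different one disjoint from $\gamma$ --- which is harmless, since only the existence of some C-complex for $L$ whose equivalence class survives the surgery is needed. (Also, the existence of a C-complex does not rely on vanishing linking numbers; every link admits one.) This part of your argument is fixable along those lines, but the (1)$\Rightarrow$(3) step needs the missing input described above.
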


The implication (2) $\implies$ (1) follows immediately from the formulation of $\overline \mu_{ijk}(L)$ in \cite{MellorMelvin2003}.  We recall this formulation in subsection \ref{sect:triple linking}.  The implication (1) $\implies$ (3) relies on a necessary and sufficient condition due to Martin \cite[Theorem 1]{MartinThesis} for two links to be related by a sequence of band pass moves.  See Figure \ref{fig:BPSurgery}.

For links with nonvanishing pairwise linking number, $\overline{\mu}_{ijk}$ is only well defined modulo the greatest common divisor of $\lnk(L_i, L_j)$, $\lnk(L_i, L_k)$ and $\lnk(L_j, L_k)$.  On our way to proving Theorem~\ref{thm:main} above we gain the following result.

\begin{proposition}\label{Prop:mu123}
Let $L$ and $J$ be $n$-component links.  If $L$ and $J$ admit equivalent C-complexes, then for all $1\le i<j<k$, $\overline{\mu}_{ijk}(L) = \overline{\mu}_{ijk}(J)$ (modulo the greatest common divisor of $\lnk(L_i, L_j)$, $\lnk(L_i, L_k)$ and $\lnk(L_j, L_k)$).
\end{proposition}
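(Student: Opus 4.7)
The plan is to appeal directly to the Mellor--Melvin formula for $\overline{\mu}_{ijk}(L)$, which the authors will recall in Section~\ref{sect:triple linking}. That formula expresses $\overline{\mu}_{ijk}(L)$, as an element of $\Z/\GCD(\lnk(L_i,L_j),\lnk(L_i,L_k),\lnk(L_j,L_k))$, in terms of purely combinatorial/intersection-theoretic data read off from any C-complex $F=F_1\cup\cdots\cup F_n$ for $L$: the signs of the clasps, which pair of component surfaces each clasp involves, and certain intersection numbers between arcs (or $1$-cycles) supported on the surfaces $F_i,F_j,F_k$.

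First I would fix equivalent C-complexes $F$ and $G$ for $L$ and $J$, and fix the homeomorphism $\Phi:F\to G$ from Definition~\ref{def: equiv}, which restricts to orientation-preserving homeomorphisms $F_\ell\to G_\ell$ and preserves the signs of clasps. I would then go through the Mellor--Melvin formula ingredient by ingredient and check that each is carried by $\Phi$ to the corresponding ingredient computed from $G$. The clasp data is preserved by hypothesis. The intersection-theoretic contributions are computed intrinsically on the $2$-complex $F$ (they come from counting algebraic intersections of arcs lying in the surfaces $F_i,F_j,F_k$, with signs determined by the orientations of those surfaces), so they are preserved by any orientation-preserving homeomorphism of the underlying C-complex.

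Consequently the Mellor--Melvin expression computed from $F$ equals the expression computed from $G$. Since one side is a representative of $\overline{\mu}_{ijk}(L)$ and the other of $\overline{\mu}_{ijk}(J)$, modulo $\GCD(\lnk(L_i,L_j),\lnk(L_i,L_k),\lnk(L_j,L_k))$ (which equals the GCD of the corresponding linking numbers for $J$, since those are counted with sign by the clasps between $F_i$ and $F_j$ and hence are preserved by $\Phi$ as well), the desired congruence follows.

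The only real obstacle is bookkeeping: one must be careful that the orientation-preserving nature of $\Phi|_{F_\ell}$ is precisely what is needed to make the signed intersection numbers invariant, and that clasp-sign preservation takes care of the remaining combinatorial contributions. Once the Mellor--Melvin formula has been recalled in Section~\ref{sect:triple linking} with an explicit list of its inputs, verifying their $\Phi$-invariance is essentially immediate, and the proof reduces to a short paragraph.
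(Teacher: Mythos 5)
Your proposal is correct and follows essentially the same route as the paper: the paper's proof is precisely the observation that the Mellor--Melvin formula, as recalled in Definition~\ref{defn:mu123}, is computed entirely from data preserved by an equivalence, so equivalent C-complexes yield the same value modulo $\GCD(\lnk(L_i,L_j),\lnk(L_i,L_k),\lnk(L_j,L_k))$. In fact the verification is even simpler than you anticipate, since in the C-complex formulation there are no auxiliary intersection numbers of arcs or $1$-cycles to track: the formula depends only on the claspwords (clasp signs, which pair of surfaces each clasp joins, and their cyclic order along each boundary), and these are exactly the data preserved by the homeomorphism $\Phi$ of Definition~\ref{def: equiv}.
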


Note that our results fail to address the question posed in the title of this document in the case of links with more than two components and non-vanishing pairwise linking numbers.  The complete solution will require an answer to the following question.

\begin{question}
Let $L$ and $J$ be $n$-component links.  Suppose that $\lnk(L_i,L_j)=\lnk(J_i,J_j)$ for all $i,j$ but that $\lnk(L_i,L_j)\neq 0$ for some $i,j$. Suppose also that $\overline{\mu}_{ijk}(L)=\overline{\mu}_{i,j,k}(L)$ modulo the greatest common divisor of $\lnk(L_i, L_j)$, $\lnk(L_i, L_k)$ and $\lnk(L_j, L_k)$.  Does it follow that $L$ and $J$ admit equivalent C-complexes?
\end{question}

The genus of a knot $K$ is defined to be the minimum genus of all Seifert surfaces for $K$.  Similarly, for a link $L$ one can define $\beta(L)$ to be the minimum first Betti number of all C-complexes for $L$.  The following question asks how this measure of complexity behaves for equivalent C-complexes.

\begin{question}
Suppose that $L$ and $J$ are $n$-component links which admit equivalent C-complexes.  Let $\beta(L,J)$ the the minimum first betti number of all C-complexes for $L$ which are equivalent to some C-complex for $J$.  Do there exist links $L$ and $J$ for which $\beta(L,J)>\max(\beta(L),\beta(J))$?
\end{question}

\subsection{Organization of paper.}

In Section \ref{sect: C-complexes} we state formally the definition of a C-complex and what it means for two C-complexes to be equivalent.  We then study the relationship between linking numbers and C-complexes.  In subsection \ref{sect:2-components} we prove Theorem~\ref{thm: 2-components}.  In Section~\ref{sect:prelim} we recall the meaning of Milnor's triple linking number and briefly recall surgery.  Finally, in Section~\ref{sect: many components} we prove Theorem~\ref{thm:main}.  

\section{C-complexes and linking numbers}\label{sect: C-complexes}


We begin by recalling the definition of a $C$-complex appearing in \cite{CimFlo}.   

\begin{definition}
An $n$-component C-complex $F=F_1\cup\dots\cup F_n$ is a union of compact oriented connected oriented surfaces in $S^3$ such that 
\begin{enumerate}
\item For all $i$, $\bdry F_i$ is a simple closed curve.
\item For $i\neq j$, $F_i\cap F_j$ is a union of embedded arcs running from a point on $\bdry F_i$ to a point on $\bdry F_j$.  These arcs are called \textbf{clasps}. See Figure \ref{fig:clasps}.
\item For $1\le i<j<k\le n$, $F_i\cap F_j\cap F_k = \emptyset$.
\end{enumerate}
\end{definition}

  Given an oriented link $L = L_1\cup\dots \cup L_n$, we say that $F=F_1\cup \dots \cup F_n$ is a C-complex for $L$ if $\bdry F_i = L_i$ for $i=1,\dots n$.  In \cite[Lemma 1]{Cimasoni2004}, Cimasoni proves that every link admits a C-complex.     
  
\begin{figure}
\begin{picture}(320,100)
\put(70,0){\includegraphics[width=8em]{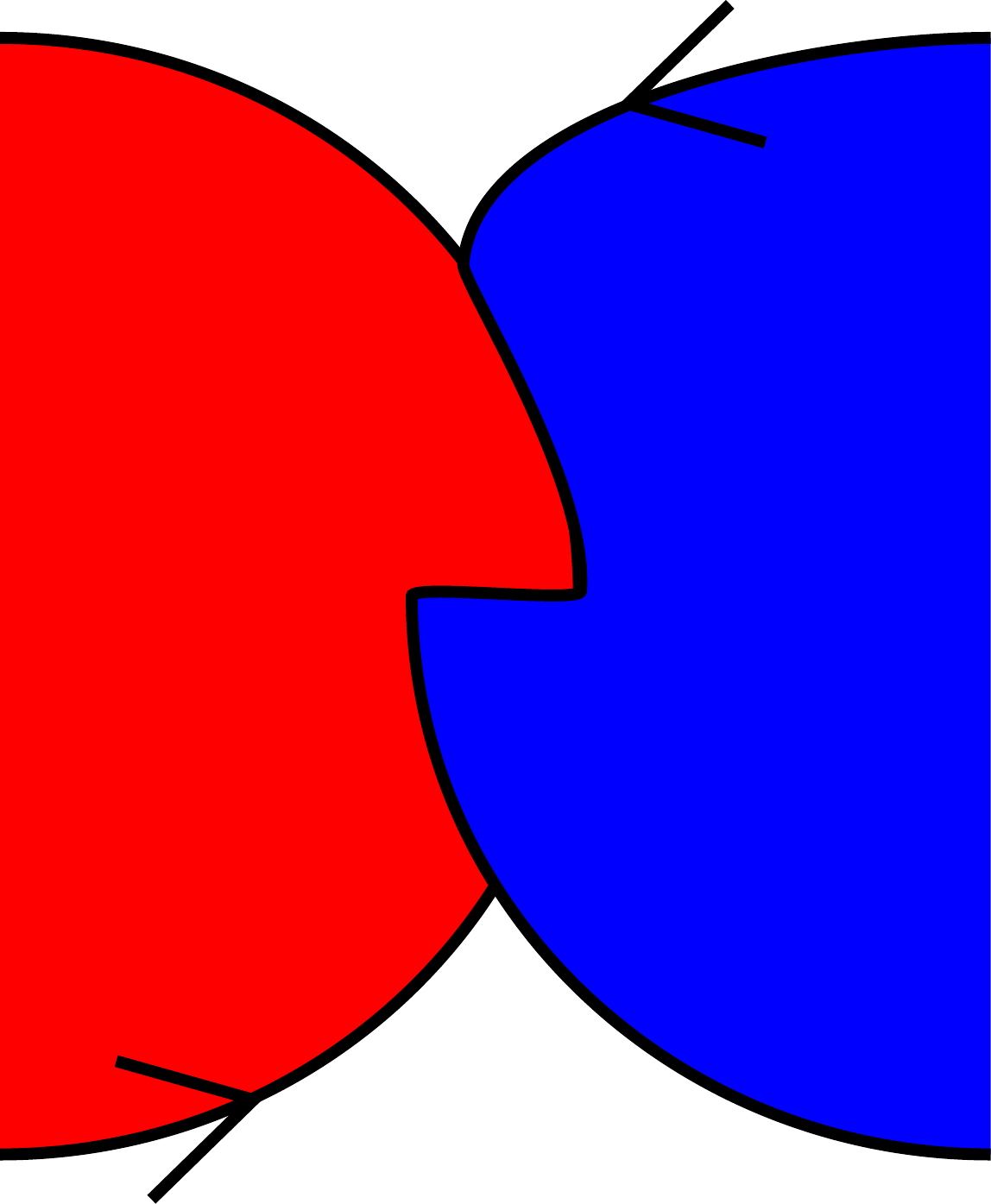}}
\put(200,0){\includegraphics[width=8em]{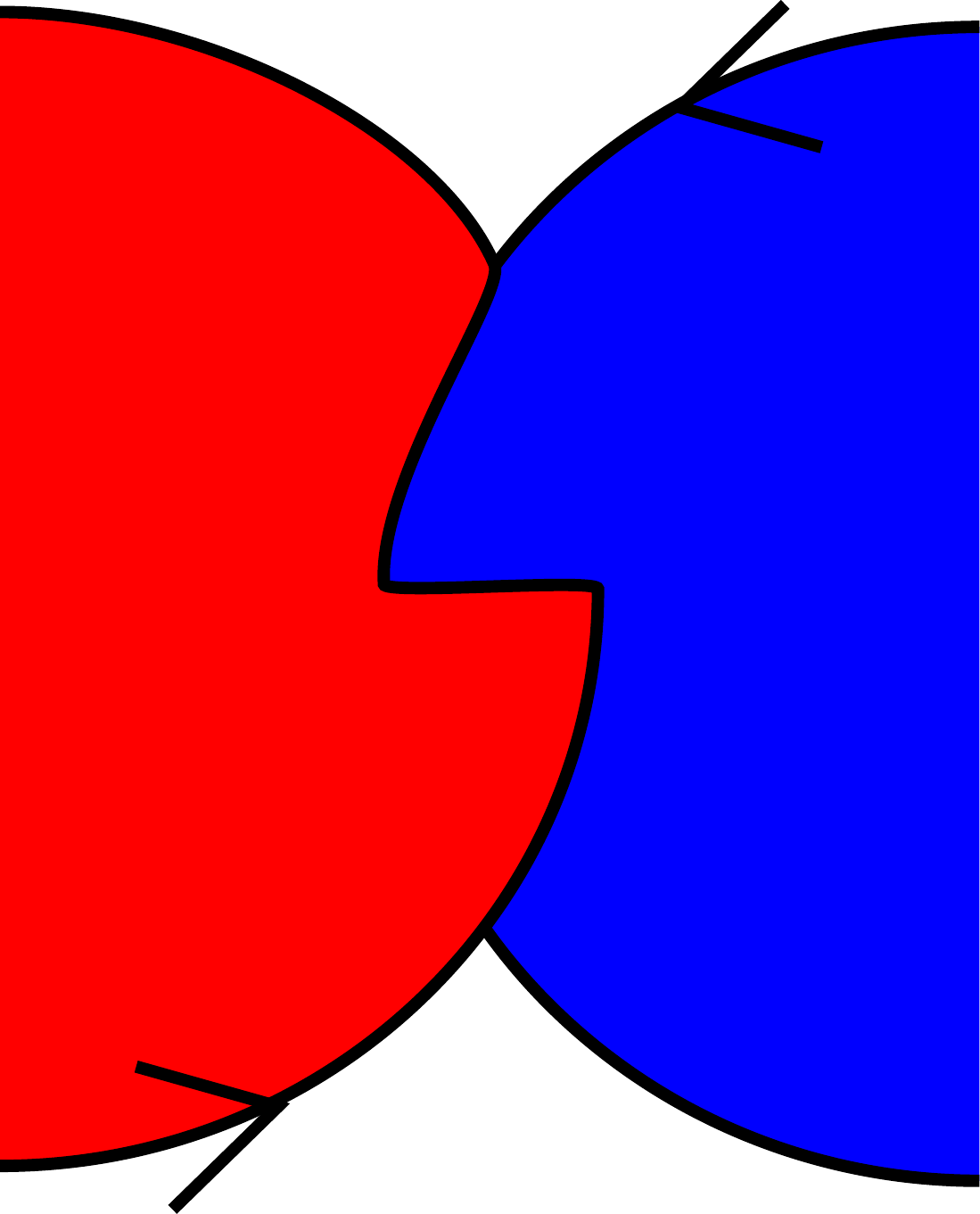}}
\end{picture}
\caption{Left:  A positive clasp. Right: A negative clasp.}
\label{fig:clasps}
\end{figure}

\begin{figure}
\begin{picture}(170,120)
\put(0,0){
\includegraphics[height=12em]{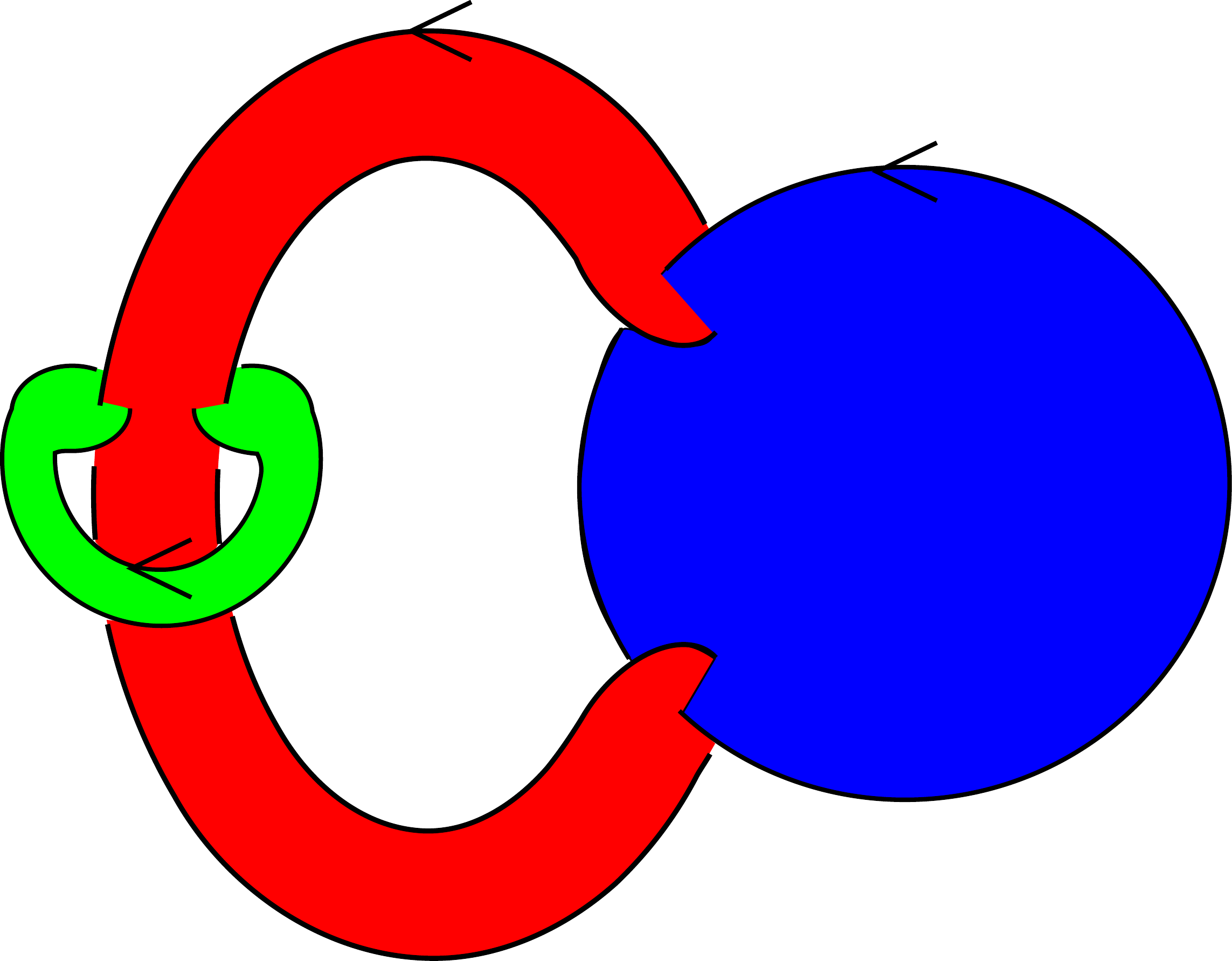}
}
\put(0,47){$L_3$}
\put(50,25){$L_1$}
\put(170,40){$L_2$}

\put(0,85){$c_3^{-1}$}
\put(40,85){$c_4^{+1}$}
\put(70,81){$c_1^{-1}$}
\put(70,35){$c_2^{+1}$}

\end{picture}

\caption{A C-complex for the Boromean Rings with the components and clasps labeled.}
\label{fig:BR}
\end{figure}

\begin{definition}
\label{def: equiv}
Two C-complexes $F = F_1\cup \dots \cup F_n$ and $G = G_1\cup \dots \cup G_n$ in $S^3$ are called \textbf{equivalent} if there exists a homeomorphism $\Phi:F\to G$ such that 
\begin{enumerate}
\item For $k=1,\dots, n$, the restriction $\Phi|_{F_k}$ is an orientation preserving homeomorphism from $F_k$ to $G_k$
\item For every clasp $\c\subseteq F_a\cap F_b$, $\Phi(c) \subseteq G_a\cap G_b$ is a clasp with the same sign as $c$.
\end{enumerate} 
The homeomorphism $\Phi$ is called an \textbf{equivalence} between $F$ and $G$.
\end{definition}

\begin{remark}
   If $N(F_i)$ and $N(G_i)$ are regular neighborhoods of $F_i$ and $G_i$, then the condition that the signs of the clasps agree implies that $\Phi$ extends  to an orientation preserving homeomorphism  from $N(F_1)\cup\dots\cup N(F_n)$ to $N(G_1) \cup\dots\cup N(G_n)$.  
\end{remark}

We proceed by discussing the classification of C-complexes up to this notion of equivalence.  For a C-complex $F = F_1\cup \dots \cup F_n$ bounded by $L_1,\dots, L_n$ let $g(F_k)$ be the genus of $F_k$, the $k$'th component of $F$.  Since the genus of a surface is an invariant of that surface, $g(F_k)$ is an invariant of the equivalence class of $F$.  

Another invariant can be seen by recording the clasps of $F$.  For a C-complex $F$, let $\{c_1,\dots c_k\}$ refer to the set of clasps.  Any clasp $c_i\subseteq F_a\cap F_b$ is assigned a sign $\epsilon_i = \sgn(c_i) = \pm1$ depending on the intersection between $L_a$ and $F_b$ at $c_i$.  See Figure~\ref{fig:clasps}.  In order to encode the sign of the clasps in the notation, we will say $c_i^{\epsilon_i}$ in place of $c_i$.  {For example, consider the C-complex for the Boromean rings of Figure \ref{fig:BR}.  It has clasp set $\{c_1^{-1}, c_2^{+1}, c_3^{-1}, c_4^{+1}\}$.  }

  After picking a basepoint $p_k\in \bdry F_k$ away from the clasps one can build a word $\omega_k(F)$ in the letters $\{c_1^{\epsilon_1},\dots,c_\ell^{\epsilon_\ell}\}$ by following the boundary $L_k$ of $F_k$ and recording $c_i^{\epsilon_i}$ whenever $L_k$ passes through the clasp $c_i^{\epsilon_i}$.  We call these \textbf{claspwords}.  Notice that each clasp $c_i^{\epsilon_i}\in \C^F$ appears in precisely two of these claspwords in which it appears once.  A change of basepoint alters $\omega_k$ by a cyclic permutation.  Notice that since the assignment of labels to the clasps was arbitrary, we can change $\{\omega_k(F)\}_{k=1}^n$ by any permutation of the names of the clasps.

For example, consider again the C-complex for the Boromean rings in Figure~\ref{fig:BR}.  It has 
$$
\omega_1(F)=c_1^{-1}c_3^{-1}c_2^{+1}c_4^{+1},~
\omega_2(F)=c_1^{-1}c_2^{+1},~
\omega_2(F)=c_3^{-1}c_4^{+1},~g(F_1)=g(F_2)=g(F_3)=0.
$$

These invariants give a complete description of the equivalence of C-complexes.

\begin{proposition}\label{prop:equiv C-comp}
Let $F=F_1\cup \dots \cup F_n$ and $G=G_1\cup \dots \cup G_n$ be $n$-component C-complexes.  Then $F$ is equivalent to $G$ if and only if for all $k$,  $g(F_k)=g(G_k)$ and $\omega_k(F)=\omega_k(G)$ (up to a cyclic permutation and relabeling the clasps.)  
\end{proposition}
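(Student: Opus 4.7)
The $(\Rightarrow)$ implication is immediate from the definition of equivalence: an equivalence $\Phi \colon F \to G$ restricts on each $F_k$ to an orientation preserving homeomorphism to $G_k$, hence preserves genus, and the induced bijection on clasp sets (which preserves signs by definition) is exactly the relabeling under which the claspwords agree, up to the cyclic ambiguity introduced by basepoint choice.

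For $(\Leftarrow)$, my plan is a three stage construction of $\Phi$. After relabeling I may assume $\omega_k(F) = \omega_k(G)$ on each component as cyclic words in $\{c_i^{\epsilon_i}\}$. First, near each clasp $c_i^{\epsilon_i} \subset F_a \cap F_b$ I pick a small tubular neighborhood $U_i \subset F_a \cup F_b$, and likewise $V_i \subset G_a \cup G_b$ near the corresponding clasp $d_i^{\epsilon_i}$. Because the two local models depend only on the sign, there is an orientation preserving homeomorphism $\Phi_i \colon U_i \to V_i$ carrying $U_i \cap F_a$ to $V_i \cap G_a$ and $U_i \cap F_b$ to $V_i \cap G_b$. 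Second, the complementary arcs of $L_k \setminus \bigsqcup_i U_i$ match those of $L_k' \setminus \bigsqcup_i V_i$ in cyclic order precisely because $\omega_k(F) = \omega_k(G)$, so I extend $\Phi$ orientation preservingly arc by arc to get an orientation preserving homeomorphism $L_k \to L_k'$. Third, for each $k$ let $F_k^\circ \subset F_k$ be the complement of the half-disk clasp neighborhoods; this is still a compact oriented connected surface of genus $g(F_k)$ with one boundary component, as is $G_k^\circ$, and at this stage $\Phi$ has been built as an orientation preserving homeomorphism $\partial F_k^\circ \to \partial G_k^\circ$. The classical extension theorem for homeomorphisms of compact oriented surfaces of equal genus and equal number of boundary components then supplies an extension over $F_k^\circ$, and gluing back the clasp neighborhoods completes $\Phi_k$.

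Because each $U_i$ meets exactly two surface components, the local maps $\Phi_i$ from stage one automatically glue consistently to both $F_a$ and $F_b$, so the component maps $\Phi_k$ assemble into the desired equivalence $\Phi \colon F \to G$. The main (though mild) obstacle I foresee is the orientation bookkeeping in stage three: one must verify that the choices from stages one and two combine into an orientation preserving map $\partial F_k^\circ \to \partial G_k^\circ$ with respect to the orientations induced by $F_k$ and $G_k$. This will be arranged by exercising the freedom within each $\Phi_i$ so that the induced boundary orientations match, after which the classification of compact oriented surfaces finishes the job.
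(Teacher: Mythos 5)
Your proposal is correct and follows essentially the same route as the paper: both directions match, and your three-stage construction (local clasp models, matching boundary arcs via the claspwords, then extending over the complementary genus-$g(F_k)$ subsurface with one boundary component using the classification of compact oriented surfaces) is the paper's argument with its annular neighborhood $A_k(F)$ replaced by clasp neighborhoods plus boundary arcs. Your explicit attention to consistency on clasps shared by two components and to the boundary-orientation bookkeeping is slightly more careful than the paper's write-up, but it is the same proof in substance.
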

\begin{proof}

  If $\Phi:F\to G$ is an equivalence, then $\Phi|_{F_k}$ is an orientation preserving homeomorphism.  Since genus is an invariant of surfaces, $g(F_k)=g(G_k)$.   Suppose that $\omega_k(F) = c^{\epsilon_{a_1}}_{a_1}\dots c^{\epsilon_{a_k}}_{a_k}$.   Since $\Phi$ restricts to a homeomorphism from $\bdry F_k$ to $\bdry G_k$ and preserves the signs of the clasps, $\omega_k(F) = \Phi(c_{a_1})^{\epsilon_{a_1}}\dots \Phi(c_{a_k})^{\epsilon_{a_k}}$.  Thus, up to a relabeling of the clasps, $\omega_k(F) = \omega_k(G)$.


Now suppose that for some labeling of the clasps, some choice of basepoints and all $k$,  $\omega_k(F)=\omega_k(G)$ and  $g(F_k) = g(G_k)$.  Let $A_k(F)$ be a closed annular neighborhood of $\bdry F_k$ containing all of the clasps in $F_k$ and $F_k^0\subseteq F_k$ be the closure of the complement of $A_k(F)\subseteq F_k$.  Since  $\omega_k(F)=\omega_k(G)$, there is a homeomorphism $\Phi^A_k: A_k(F)\to A_k(G)$ preserving the clasps.  Since $F_k^0$ and $G_k^0$ are surfaces with the same genus and one boundary component each, the restriction $(\Phi^A_k)|_{\bdry F_k^0}$ extends to a homeomorphism $F^0_k\to G^0_k$.  Thus, we have a homeomorphism $\Phi_k:F_k\to G_k$.  

The map $\Phi$ given by $\Phi(x)= \Phi_k(x)$ for $x\in F_k$ gives an equivalence between $F$ and $G$.
\end{proof}

If $L = L_1\cup\dots \cup L_n$ is an $n$-component link with C-complex $F_1\cup \dots \cup F_n$, then it is clear that $g(F_k)$ and $\omega_k(F)$ are definitely not invariants of $L$.  For instance, the move of Figure \ref{fig:clasppass} merely isotopes the underlying link, yet alters both the claspword and the genus.   In order to build an obstruction theory to a pair of links admitting equivalent C-complexes, we find quantities depending on the claspwords and genera of a C-complex which are invariants of the link.  One such quantity is the linking number $\lnk(L_i, L_j)$.  Recall that $\lnk(L_i, L_j)$ can be computed in terms of bounded surfaces.  See for example, \cite[Chapter 5, Section D]{Rolfsen}.  In particular, if $F_j$ is a surface bounded by $L_j$ then $\lnk(L_i, L_j)$ is given by counting  the number of positive intersections between $L_i$ and $F_j$ and then subtracting the number of negative intersections. This is the same as counting the number of positive clasps in $F_i\cap F_j$ and then subtracting the number of negative clasps.  This clearly depends only on the claspword $\omega_i(F)$.  

\subsection{The proof of Theorem~\ref{thm: 2-components}}\label{sect:2-components}

In this subsection, we prove that if a pair of 2-component links have the same linking number, then these links admit equivalent C-complexes.

\begin{figure}
\begin{picture}(320,100)
\put(70,0){\includegraphics[width=8em]{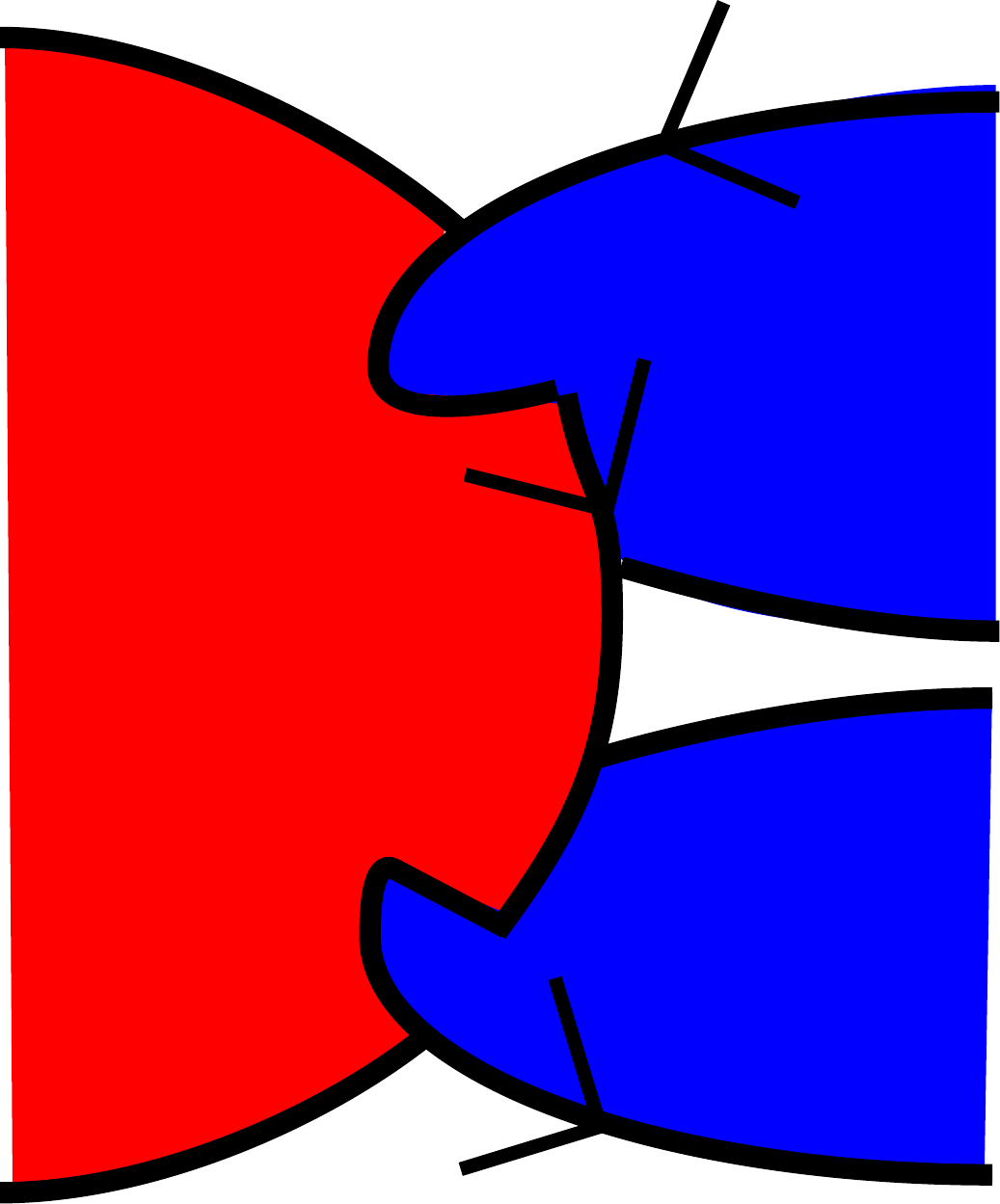}}
\put(170,40){\huge{$\rightsquigarrow$}}
\put(200,0){\includegraphics[width = 10em]{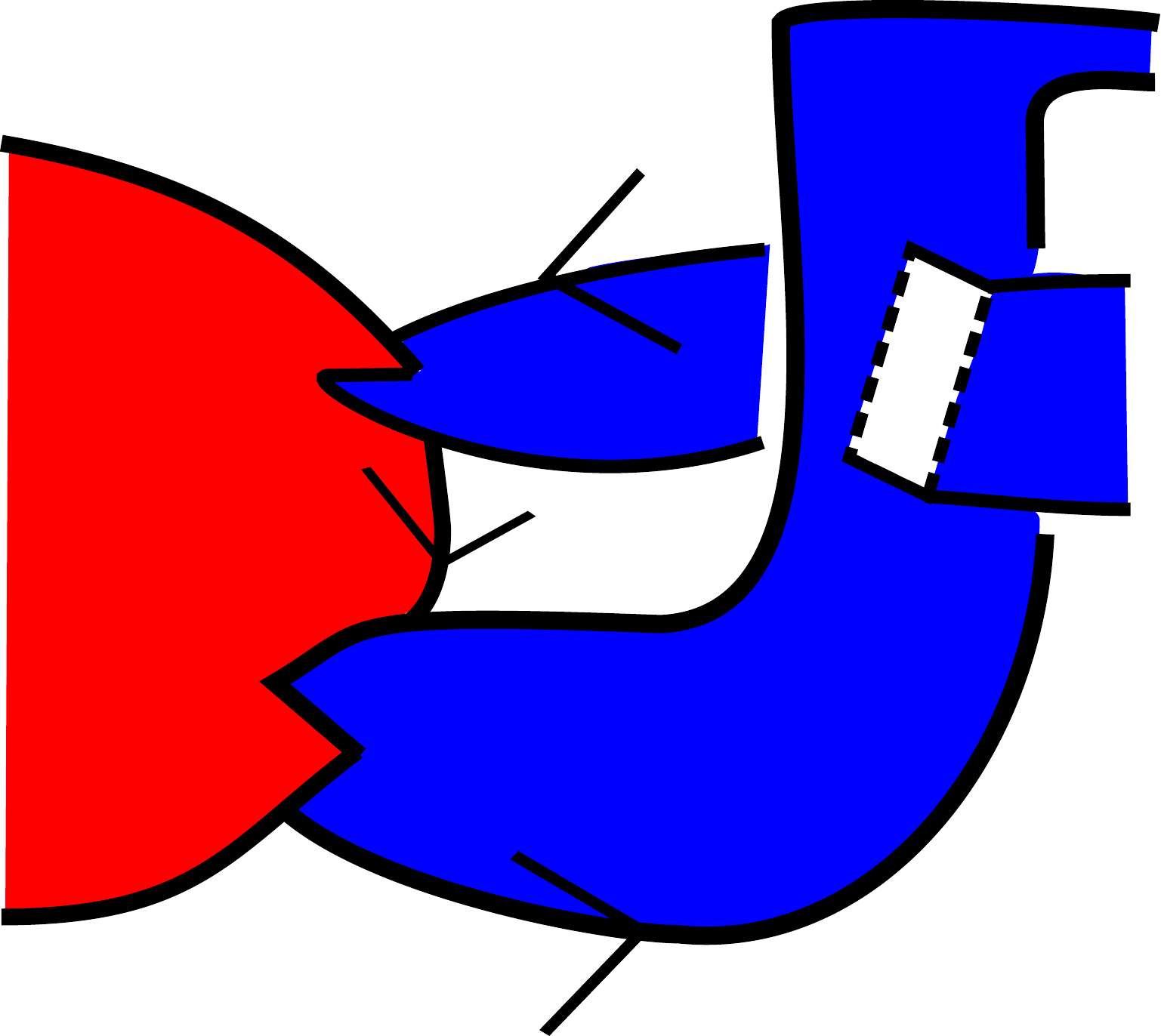}}
\end{picture}
\caption{By isotoping $L=L_1 \cup L_2$, we modify $\omega_1$ by the transposition of two consecutive clasps.  Notice that this increases the genus of $F_2$ by 1.  }\label{fig:clasppass}
\end{figure}

\begin{reptheorem}{thm: 2-components}
Let $L=L_1 \cup L_2$ and $J=J_1 \cup J_2$ be 2-component links.  Then  $\lnk(L_1 , L_2) = \lnk(J_1,J_2)$ if and only if $L$ and $J$ admit equivalent C-complexes.
\end{reptheorem}
\begin{proof}
That the linking number provides an obstruction is trivial, as the linking number can be computed in terms of the number of positive and negative clasps.  It suffices then to suppose that $\lnk(L_1,L_2) = \lnk(J_1,J_2)$ and construct equivalent C-complexes for $L$ and $J$.  Consider any two C-complexes $F = F_1\cup F_2$ and $G=G_1\cup G_2$ for $L$ and $J$ respectively.   We begin by modifying these C-complexes so that their claspwords become very simple.

Suppose that $F$ has $m$ positive and $n$ negative clasps.  Then $F$ has a total of $k=m+n$ clasps.   Since each of these clasps must then be between $F_1$ and $F_2$, every clasp appears once in $\omega_1(F)$ and once in $\omega_2(F)$.  By labeling the clasps according to the order they appear in $L_1$ we arrange that 
$$\omega_{1}(F)=c_{1}^{\epsilon_1}\dots c_k^{\epsilon_k} 
\text{ and  }
\omega_{2}(F) = c_{\sigma(1)}^{\epsilon_{\sigma(1)}}\dots c_{\sigma(k)}^{\epsilon_{\sigma(k)}}$$
for some permutation $\sigma$.  

Consider the move of Figure~\ref{fig:clasppass}. It transposes two adjacent clasps in $\omega_1(F)$ and does not change $\omega_2(F)$. Therefore, $\omega_{1}(F)$ can modified to read $c_{\rho(1)}^{\epsilon_{\rho(1)}} \dots c_{\rho(k)}^{\epsilon_{\rho(k)}}$ for any  permutation, $\rho$.  Pick $\rho$ such that $\epsilon_{\rho(1)} = \dots = \epsilon_{\rho(m)} = +1$ and $\epsilon_{\rho(m+1)} = \dots = \epsilon_{\rho(k)} = -1$.   By relabeling the clasps we now have that 
 $$\omega_{1}(F)=c_{1}^{+1}\dots c_{m}^{+1}c_{m+1}^{-1} \dots c_k^{-1}.$$
 We can now similarly permute $\omega_2(F)$ without altering $\omega_1(F)$ until 
  $$\omega_{2}(F)=c_{1}^{+1}\dots c_{m}^{+1}c_{m+1}^{-1}\dots c_k^{-1}.$$
Similarly we arrange that  $$\omega_{1}(G) = \omega_2(G) = c_{1}^{+1}\dots c_{m'}^{+1}c_{m'+1}^{-1} \dots c_{k'}^{-1}$$
for some $m',k'\in \N$


By modifying $F$ or $G$ as in  Figure~\ref{fig:SL}, we increase the number of positive and negative clasps until $m=m'$.  Since $L$ and $J$ have identical linking numbers, it must then follow that $F$ and $G$ have the same number of negative clasps also.  Thus, we have that $k=k'$.  Notice then that $\omega_1(F)=\omega_1(G)$ and $\omega_2(F)=\omega_2(G)$.  Finally, using the modification in Figure~\ref{fig:Genus} we may assume that $g(F_1)=g(G_1)$ and $g(F_2)=g(G_2)$.  Proposition~\ref{prop:equiv C-comp} now allows us to conclude that $F$ and $G$ are equivalent.

\begin{figure}
\begin{picture}(320,100)
\put(70,0){\includegraphics[height=8em]{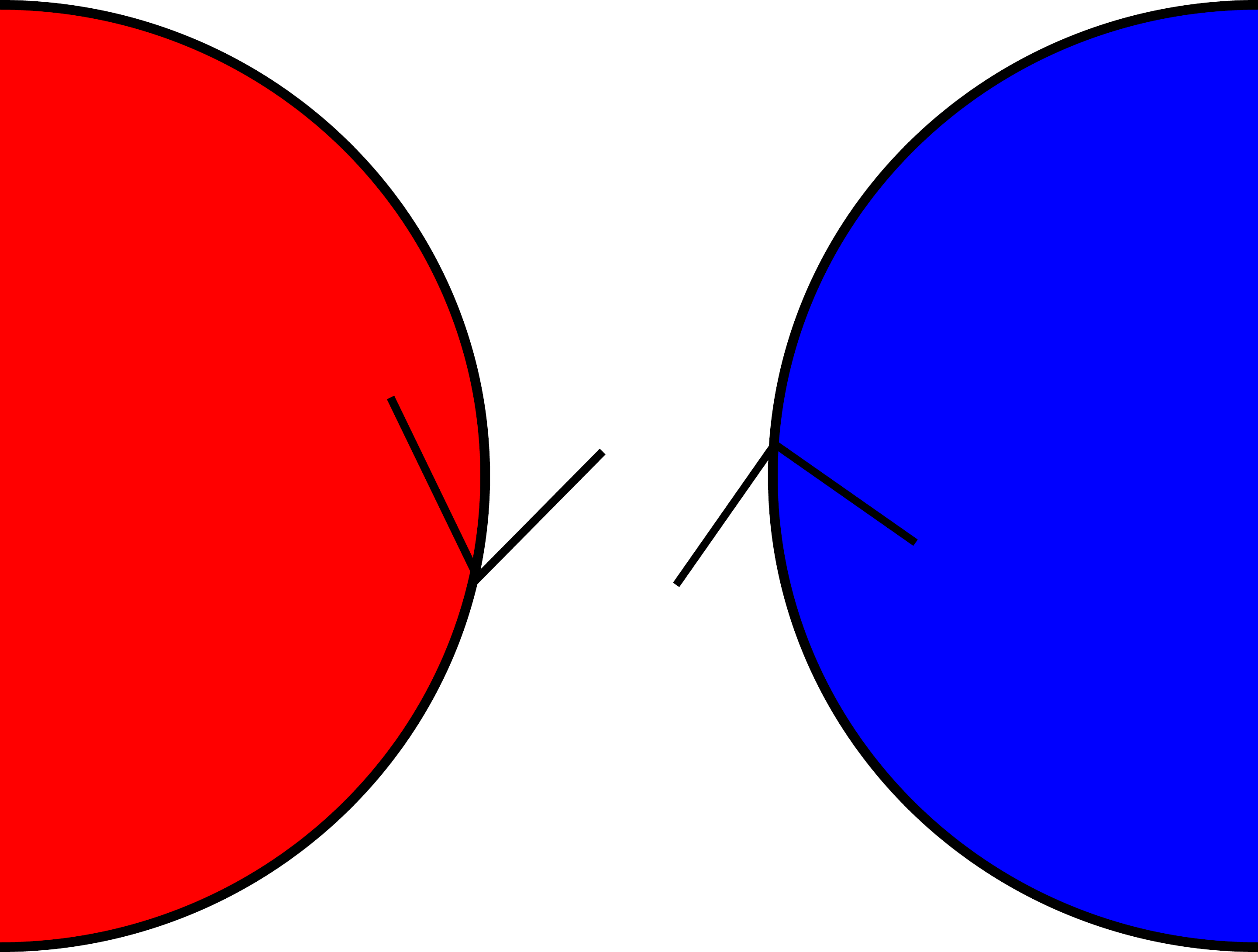}}
\put(190,40){\huge{$\rightsquigarrow$}}
\put(215,0){\includegraphics[height=8em]{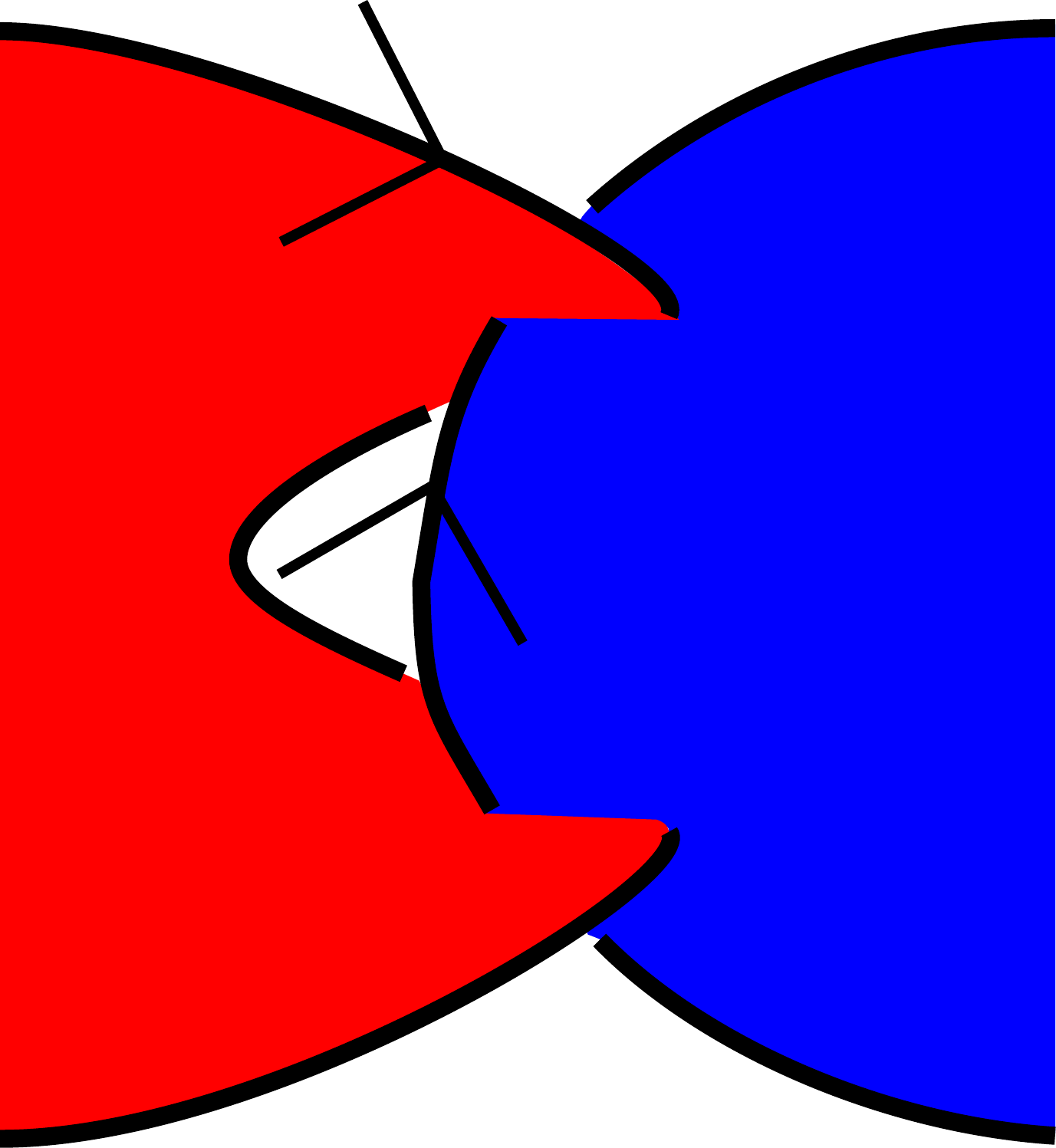}}
\end{picture}

\caption{Addition of a pair of canceling clasps.}
\label{fig:SL}
\end{figure}

\end{proof}

\section{Preliminaries to the proof of Theorem~\ref{thm:main}}\label{sect:prelim}

In this section we recall the concepts needed for our study of C-complexes for links with more than two components.   More precisely, we recall a reformulation of Milnor's triple linking number due to Mellor and Melvin \cite{MellorMelvin2003}.  We go on to informally discuss the notion of surgery.  Since our proof also makes use of the Arf invariant of knots and the Sato-Levine invariant of 2-component links we recall these also in this section.

\subsection{Milnor's triple linking number}\label{sect:triple linking}


  Let  $J=J_1 \cup J_2 \cup J_3$ be a 3 component link admitting a C-complex $F=F_1\cup F_2\cup F_3$ with claspwords $\omega_1(F)$, $\omega_2(F)$, $\omega_3(F)$.  Out of these claspwords we generate new words $u_1(F)$, $u_2(F)$, and $u_3(F)$ in the variables $x_1^{\pm1}, x_2^{\pm1}, x_3^{\pm1}$ as follows.  If $c_\ell^{\epsilon_\ell}$ is a clasp between $F_i$ and $F_j$ then in $\omega_i(F)$, replace $c_\ell^{\epsilon_\ell}$ with $x_j^{\epsilon_\ell}$ and in $\omega_j(F)$ replace $c_\ell^{\epsilon_\ell}$ with $x_i^{\epsilon_\ell}$.  The words $u_1(F)$, $u_2(F)$, and $u_3(F)$ are obtained from $\omega_1(F)$, $\omega_2(F)$, and $\omega_3(F)$ by making this replacement for every clasp.  

  Finally, consider $u_1(F)$, $u_2(F)$, and $u_3(F)$ and take their Magnus expansions.  That is, construct the formal power series in non-commuting variables  $M_1(F), M_2(F), M_3(F)\in \Z[[h_1,h_2,h_3]]$ by sending $x_i\mapsto (1+h_i)$ and $x_i^{-1}\mapsto (1-h_i+h_i^2-h_i^3\dots)$.  For $\{i,j,k\}=\{1,2,3\}$, define $\epsilon_{ijk}(F)\in \Z$ as the coefficient in front of $h_ih_j$ in $M_k(F)$.  

\begin{definition}[See Theorem 1 of \cite{MellorMelvin2003}]\label{defn:mu123}
For the 3-component link $J = J_1 \cup J_2 \cup J_3$ admitting C-complex $F=F_1\cup F_2\cup F_3$, Milnor's triple linking number is given by 
$$
\overline{\mu}_{123}(J)=\epsilon_{123}(F)+\epsilon_{312}(F)+\epsilon_{231}(F).
$$
It is well defined as an invariant of $L$ modulo the greatest common divisor of $\lnk(J_1,J_2)$, $\lnk(J_1,J_3)$, and $\lnk(J_2,J_3)$.  For an $n$-component link $L = L_1 \cup L_2 \dots \cup L_n$ and any three distinct numbers $i,j,k\in \{1,\dots,n\}$ consider the 3-component sub-link $L_i \cup L_j \cup L_k$.  Define
$$
\overline{\mu}_{ijk}(L)=\overline{\mu}_{123}(L_i \cup L_j \cup L_k).
$$
\end{definition}

The triple linking number satisfies that 
$
\overline{\mu}_{ijk}(L)=
\overline{\mu}_{jki}(L)=
\overline{\mu}_{kij}(L)=
-\overline{\mu}_{ikj}(L)=
-\overline{\mu}_{kji}(L)=
-\overline{\mu}_{jik}(L)
$,
so that for our purposes it will suffice to restrict to $1\le i<j<k\le n$.  

Notice that Definition~\ref{defn:mu123} depends only on the claspwords, $\omega_1(F)$, $\omega_2(F)$ and $\omega_3(F)$.  In turn, these depend only on the equivalence class of the C-complex, $F$.  As a consequence we see the the triple linking number provides an obstruction to 3-component links admitting equivalent C-complexes.

\begin{repproposition}{Prop:mu123}
Let $L$ and $J$ be $n$-component links.  If $L$ and $J$ admit equivalent C-complexes, then for all $1\le i<j<k$,  $\overline{\mu}_{ijk}(L) = \overline{\mu}_{ijk}(J)$ (modulo the greatest common divisor of $\lnk(L_i, L_j)$, $\lnk(L_i, L_k)$ and $\lnk(L_j, L_k)$).
\end{repproposition}

\begin{example}
For the sake of illustration, we perform this computation for the C-complex $F$ for $B$, the Boromean rings depicted in Figure~\ref{fig:BR}.  
It has clasp-words
$$
\omega_1(F)=c_1^{-1}c_3^{-1}c_2^{+1}c_4^{+1}, ~
\omega_2(F)=c_1^{-1}c_2^{+1},~
\omega_3(F)=c_3^{-1}c_4^{+1}.
$$
Performing the replacement described in Definition~\ref{defn:mu123} we get
$$
u_1(F)=x_2^{-1}x_3^{-1}x_2^{+1}x_3^{+1}, ~
u_2(F)=x_1^{-1}x_1^{+1}=1,~
u_3(F)=x_1^{-1}x_1^{+1}=1
$$ 
and Magnus expansions
$$
\begin{array}{l}
M_1(F)=(1-h_2+\dots)(1-h_3+\dots)(1+h_2)(1+h_3), \\
M_2(F)=1,~
M_3(F)=1.
\end{array}
$$ 
Expanding,
$$
M_1(F)=1-h_2^2-h_3^2-h_3h_2+h_2h_3+\dots, ~
M_2(F)=1, ~
M_3(F)=1.
$$ 
The ellipses denote terms of degree at least $3$.  Hence, 
$$
\epsilon_{123}=0,~\epsilon_{312}=0,~\epsilon_{231}=1
$$ 
and $\overline\mu_{123}(B)=1$.  
\end{example}

Since the three component unlink has $\overline{\mu}_{123}(U)=0$ we have the following corollary

\begin{corollary}
There does not exist a C-complex for the Boromean Rings which is equivalent to a C-complex for the three component unlink.  
\end{corollary}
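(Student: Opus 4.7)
The plan is to read this off directly from Proposition~\ref{Prop:mu123} combined with the computation just performed in the example. First I would observe that both the Boromean rings $B$ and the three component unlink $U$ have all pairwise linking numbers equal to zero. Hence for either link, the greatest common divisor of the three pairwise linking numbers is $0$, so the congruence statement ``$\overline\mu_{123}(B)=\overline\mu_{123}(U)$ modulo the $\gcd$'' reduces to honest equality of integers.

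Next I would invoke Proposition~\ref{Prop:mu123}: if $B$ and $U$ admitted equivalent C-complexes, then $\overline\mu_{123}(B)=\overline\mu_{123}(U)$ as integers. The preceding example establishes $\overline\mu_{123}(B)=1$. For the unlink, I would simply exhibit the obvious C-complex consisting of three disjoint disks $F_1\sqcup F_2\sqcup F_3$; it has no clasps at all, so each claspword $\omega_k(U)$ is empty, each word $u_k(F)$ is the empty word, each Magnus expansion $M_k(F)$ equals $1$, and consequently $\epsilon_{ijk}(F)=0$ for every choice of indices. Thus $\overline\mu_{123}(U)=0$.

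The required contradiction $1=0$ then follows, proving the corollary. There is no genuine obstacle here: everything needed is already in place, and the argument is simply a one-line application of Proposition~\ref{Prop:mu123} to the computation in the example together with the trivial computation of $\overline\mu_{123}$ on the disjoint-disk C-complex for $U$.
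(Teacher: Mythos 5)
Your proof is correct and is essentially the paper's own argument: the paper also deduces the corollary by combining the example's computation $\overline\mu_{123}(B)=1$ with $\overline\mu_{123}(U)=0$ for the unlink and the obstruction of Proposition~\ref{Prop:mu123} (with the vanishing pairwise linking numbers making the congruence an honest equality). Your added details, such as the disjoint-disk C-complex for $U$, only make explicit what the paper leaves implicit.
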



\subsection{Surgery}  Given a link $\gamma = \gamma_1,\dots,\gamma_k$ and a sequence of rational numbers in reduced form $\frac{p_1}{q_1},\dots, \frac{p_k}{q_k}$ one can form a new 3-dimensional manifold by cutting out tubular neighborhoods of the curves $\gamma_1,\dots, \gamma_k$ and gluing in their place solid tori $V_1,\dots, V_k$ such that for $i=1,\dots, k$ the meridian of $V_i$ is identified to the curve consisting of  $p_i$ meridians of $\gamma_i$ and $q_i$ longitudes.  For a more complete description of surgery the reader is directed to \cite[Chapter 9]{Rolfsen}.

Given a link $L$ disjoint from the surgery curves $\gamma$ of the preceding paragraph then one sees a new link by taking the image of $L$ in the space resulting from the surgery.  If the resulting link is isotopic to $J$, then we say that $J$ is \textbf{obtained} from $L$ by surgery along $\gamma$.  

\begin{figure}
\begin{picture}(320,100)
\put(240,80){-1}
\put(240,5){-1}
\put(70,0){\includegraphics[height=8em]{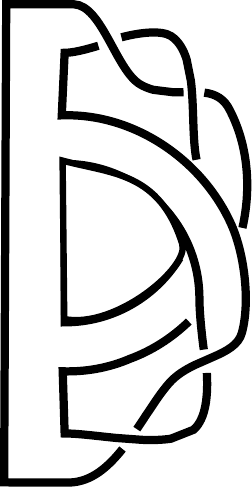}}
\put(200,0){\includegraphics[height=8em]{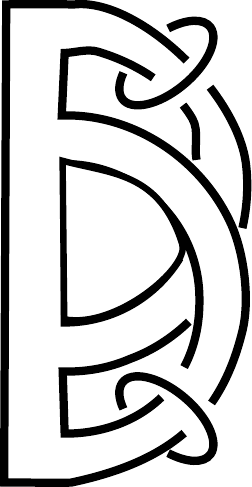}}
\end{picture}
\caption{Left: The trefoil knot.  Right: Performing -1 surgery on these curves in the complement of an unknot produces the trefoil.}
\label{fig:Trefoil}
\end{figure}
\begin{figure}
\begin{picture}(350,100)
\put(105,25){$\dots$}
\put(94,0){\small{n-full twists}}
\put(0,5){\includegraphics[height=8em]{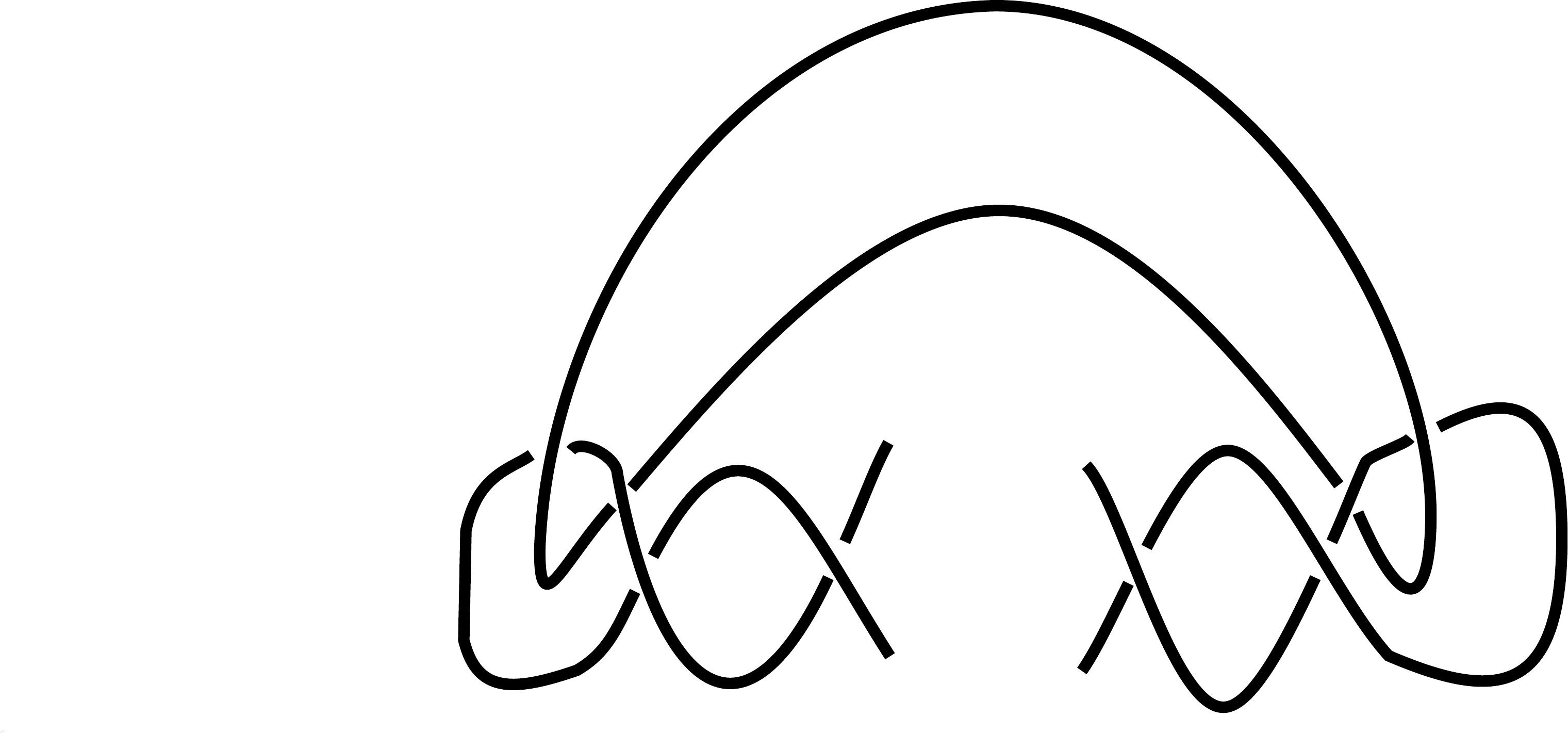}}
\put(180,5){\includegraphics[height=8em]{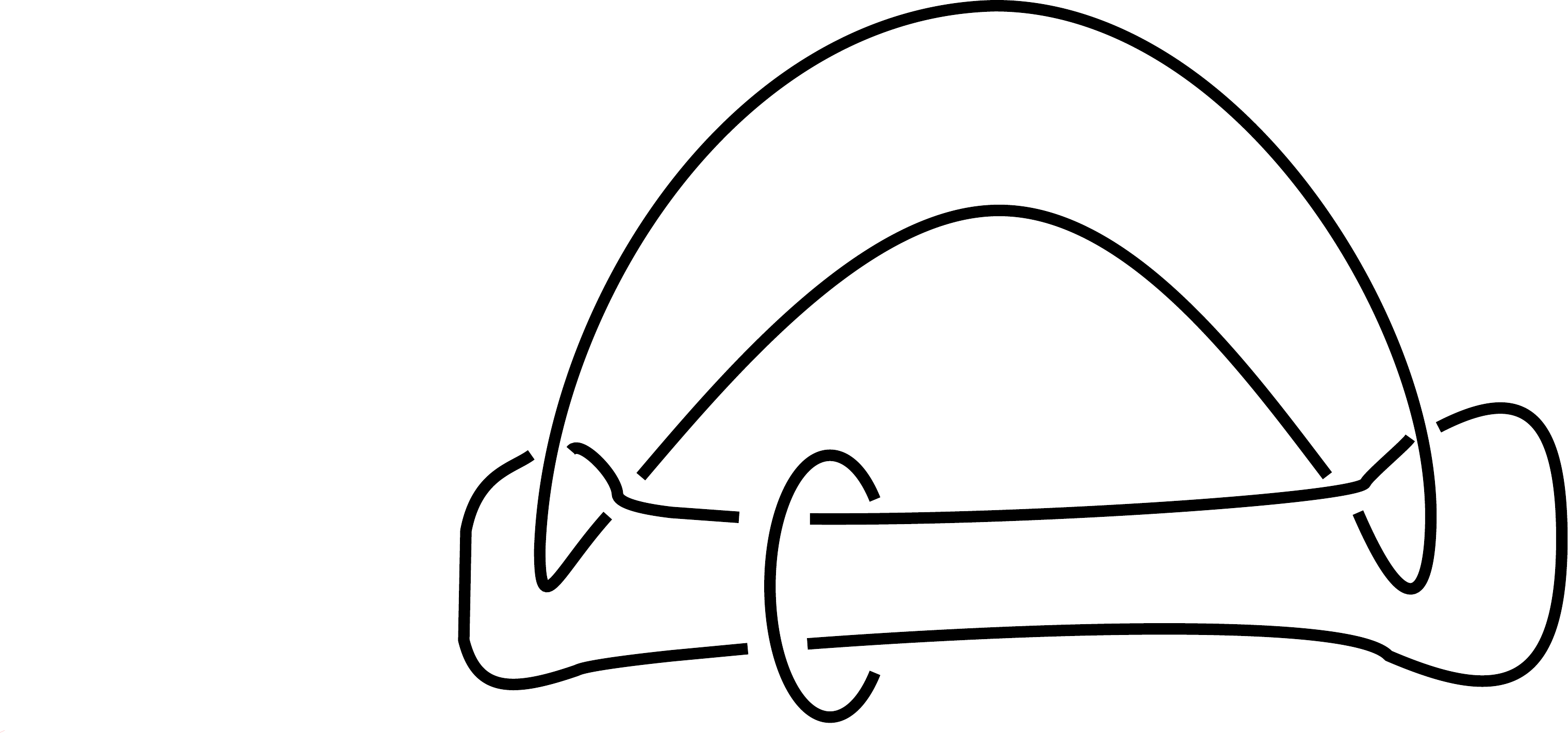}}
\put(270,-5){$-1/n$}
\end{picture}
\caption{Left:  The $n$-twisted Whitehead link.  Right: $\frac{-1}{n}$ surgery along this curve sends the unlink to the $n$-twisted Whitehead link.  }
\label{fig:Whitehead}
\end{figure}

Many interesting surgeries on $S^3$ produce $S^3$.  There are two of particular interest to us in this paper.  First, if $\gamma$ is  unknotted then $\frac{-1}{n}$ surgery along $\gamma$  produces $S^3$ and puts $n$ positive full twists in the strands of $L$ passing through a disk bounded by $\gamma$ \cite[Chapter 9]{Rolfsen}.  For example, the trefoil of Figure \ref{fig:Trefoil} is obtained from the unknot by $-1=\frac{-1}{1}$ surgery along two curves in the complement.  Similarly, the $n$-twisted Whitehead link of Figure~\ref{fig:Whitehead} is obtained from the unlink by a $\frac{-1}{n}$ surgery.   The other move of interest to us is the band pass move of Figure~\ref{fig:BPSurgery}, which can be obtained by $0=\frac{0}{1}$ surgery along two Hopf linked curves.  While this result is well known the proof in its full detail can be found in the proof of \cite[Lemma 1]{MartinThesis}.

\begin{figure}
\begin{picture}(320,100)
\put(120,76){0}
\put(115,10){0}
\put(70,0){\includegraphics[width=8em]{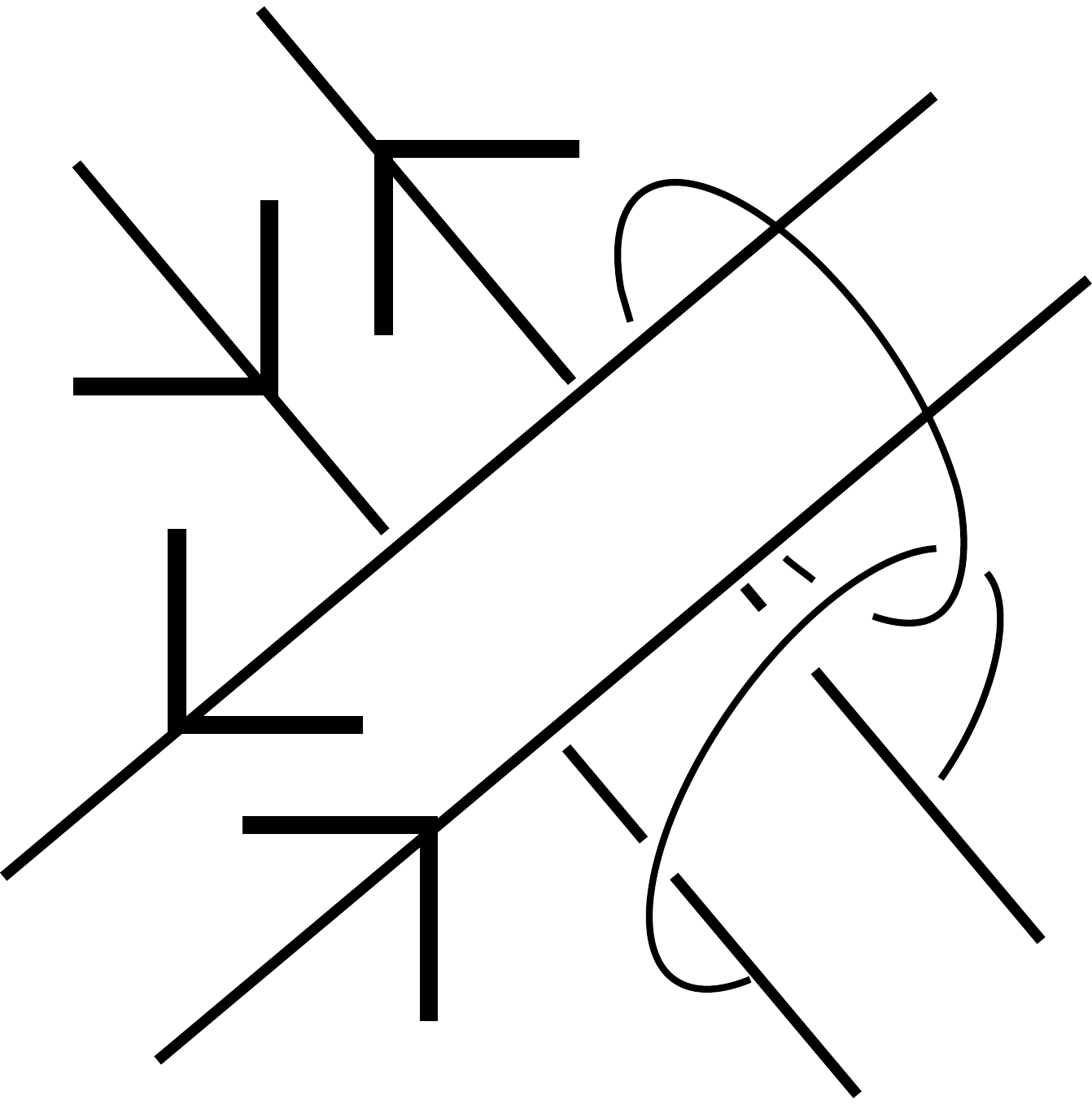}}
\put(200,0){\includegraphics[width=8em]{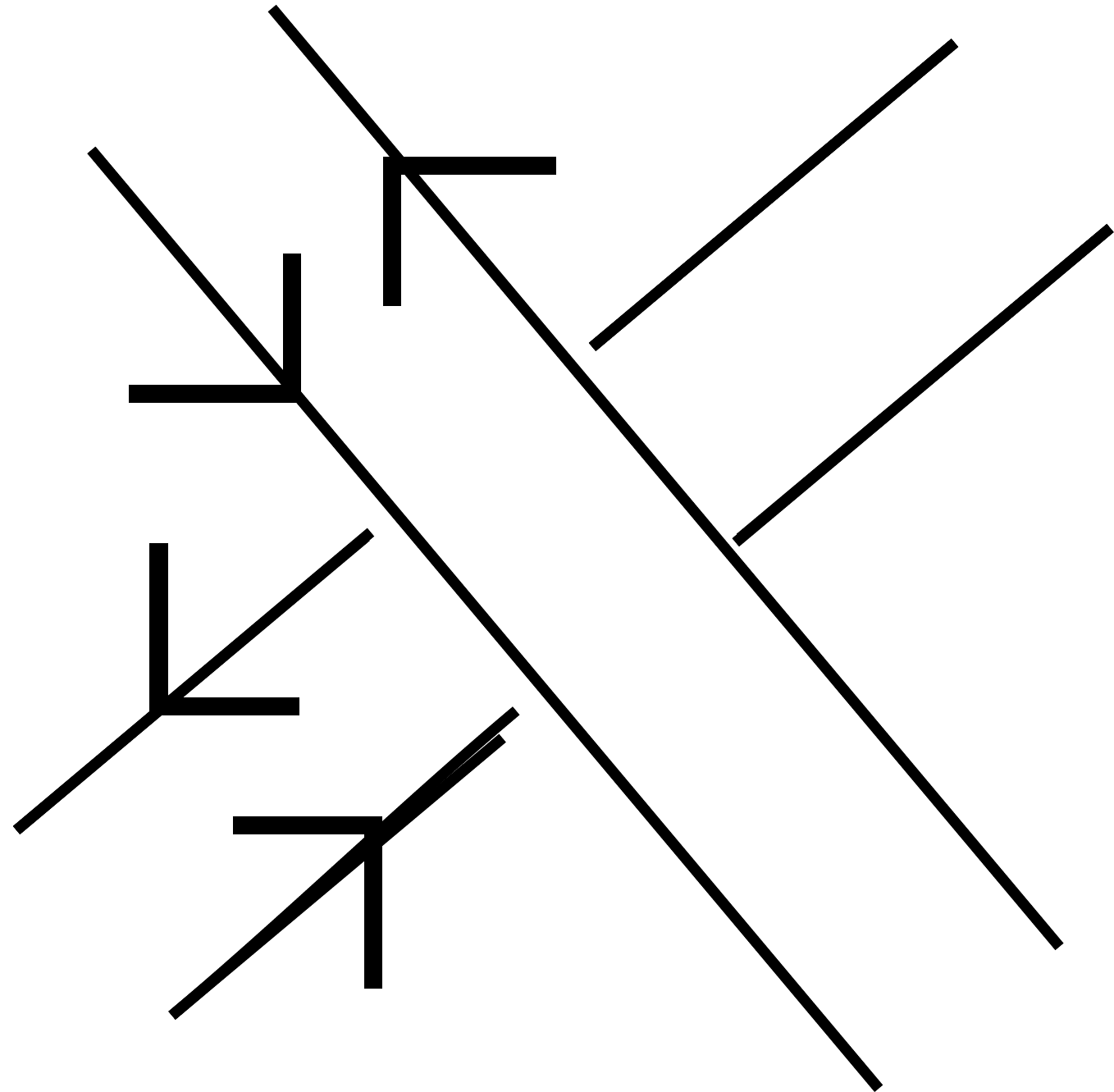}}
\end{picture}
\caption{Left:  A Hopf link in the complement of a link.  Right:  Performing $0$-surgery along the components of this Hopf link accomplishes a band pass.  }
\label{fig:BPSurgery}
\end{figure}

\subsection{The Arf invariant and the Sato-Levine invariant}

The Arf invariant is an invariant of knots which takes values in $\Z/2$.  For a complete discussion of the Arf invariant, see \cite[Chapter X]{OnKnots}.  To see a rigorous discussion on the Sato-Levine invariant for a 2-component link in terms of Milnor's invariant the reader is directed to \cite{C4}.  Our work will require the following elementary properties of these invariants.     

\begin{proposition}\label{Arf}
 For any knot $J$, let the knot $J'$ be obtained by band summing with the Trefoil.  Then the Arf-invariant satisfies that $\Arf(J') = \Arf(J)+1$.
\end{proposition}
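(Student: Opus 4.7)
The plan is to reduce the claim to additivity of the Arf invariant together with the classical computation that the trefoil has Arf invariant $1$.

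First I would build a Seifert surface for $J'$. Take a Seifert surface $F$ for $J$ and a Seifert surface $F_T$ for the trefoil $T$, chosen so that they are disjoint and meet the band $B$ used in the band sum only near $\bdry B \cap J$ and $\bdry B \cap T$. Then $F' := F \cup B \cup F_T$ is a Seifert surface for $J'$, and since the band is contractible,
\[
H_1(F') \cong H_1(F) \oplus H_1(F_T).
\]

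Next I would analyze the Arf quadratic refinement $q(x) := \lnk(x, x^+) \pmod{2}$ on $H_1(F')$. For $a \in H_1(F)$ and $b \in H_1(F_T)$, expanding the self-linking of $a + b$ yields
\[
q(a+b) \equiv q(a) + q(b) + \bigl(\lnk(a, b^+) + \lnk(b, a^+)\bigr) \pmod{2}.
\]
The bracketed sum equals the intersection number of $a$ and $b$ on $F'$, which vanishes since representatives of $a$ and $b$ live on the disjoint pieces $F$ and $F_T$. Hence $q$ splits as an orthogonal direct sum of the Arf forms coming from $F$ and $F_T$, giving
\[
\Arf(J') \equiv \Arf(J) + \Arf(T) \pmod{2}.
\]

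Finally I would compute $\Arf(T) = 1$ directly using the standard once-punctured-torus Seifert surface for $T$: on a symplectic basis $\{a, b\}$, both self-linking numbers $\lnk(a, a^+)$ and $\lnk(b, b^+)$ are odd, so $\Arf(T) = q(a) \cdot q(b) = 1$. Combining these two ingredients yields $\Arf(J') = \Arf(J) + 1$, as desired. The only mildly subtle point is the vanishing of the mixed linking term, which requires choosing cycle representatives on $F'$ that avoid one another; once disjoint $F$ and $F_T$ have been fixed this is straightforward, and the remainder is a routine Seifert-matrix computation.
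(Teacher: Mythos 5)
Your argument is fine only under an unstated hypothesis: that Seifert surfaces $F$ for $J$ and $F_T$ for the trefoil can be chosen disjoint from each other \emph{and} from the interior of the band $B$. For an arbitrary band this is simply not possible in general, and it is exactly where the difficulty lies, not in the vanishing of the mixed linking term (which is indeed routine once disjointness is granted). If your construction worked for every band, then $F\cup B\cup F_T$ would always be a Seifert surface for the band sum of genus $g(F)+g(F_T)$; but a band sum of two split unknots along a suitably knotted band yields, for example, the square knot (any fusion-number-one ribbon knot arises this way), which has genus $2$, whereas your surface would have genus $0$. So for general bands the surface $F\cup B\cup F_T$ cannot be made embedded, the Seifert form of $J'$ need not split as the orthogonal sum you use, and the direct-sum computation of $\Arf$ collapses. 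The conclusion of the proposition is nevertheless true for arbitrary bands, but that requires genuinely more input --- e.g.\ Robertello's theorem that the Arf invariant of a proper link is unchanged under fusion, so that $\Arf(K_1\#_b K_2)=\Arf(K_1)+\Arf(K_2)$ independently of $b$ --- which your argument does not supply.

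Note that the paper does not actually prove this proposition; it is quoted as a standard property with a reference to Kauffman's \emph{On Knots}, and in the proof of Theorem~\ref{thm:main} it is only applied to band sums in which the trefoil sits in a ball disjoint from the link and the band is standard (the band sum with the unknot there does not change the link type), i.e.\ to honest connected sums. In that restricted setting your proof is complete and correct: the band misses the surfaces, $H_1$ and the quadratic refinement split, $\Arf$ is additive over orthogonal sums, and $\Arf$ of the trefoil is $1$ from the genus-one surface. So either add the hypothesis that the band can be isotoped off the chosen surfaces (which covers the paper's use), or replace the surface construction by an appeal to the fusion/band-sum additivity of $\Arf$.
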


\begin{proposition}\label{Mu}
Let $\WH_n$ be the $n$-twisted Whitehead link of of Figure \ref{fig:Whitehead} and let $L=L_1\cup L_2$ be a 2-component link with vanishing pairwise linking number.  Let $L'$ be the 2-component link obtained by band summing with $\WH_n$.  Then the Sato-Levine invariant satisfies that $\overline{\mu}_{1122}(L') = \overline{\mu}_{1122}(L)+n$. 
\end{proposition}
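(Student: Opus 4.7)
The plan is to combine a Seifert-surface characterization of $\overline{\mu}_{1122}$ with a direct computation on $\WH_n$ and an additivity-under-band-sum argument. Recall (see \cite{C4}) that when $\lnk(L_1,L_2)=0$, one may choose transverse Seifert surfaces $F_1,F_2$ with $\bdry F_i=L_i$ so that, after ambient isotopy, the intersection $\alpha:=F_1\cap F_2$ is a closed oriented $1$-manifold in $S^3$, and then $\overline{\mu}_{1122}(L)=\lnk(\alpha,\alpha^+)$, where $\alpha^+$ is the pushoff of $\alpha$ determined by the surface framings of $F_1$ and $F_2$.

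First I would verify $\overline{\mu}_{1122}(\WH_n)=n$ directly from this formula. Since each component of $\WH_n$ is unknotted, take obvious disk Seifert surfaces arranged so that they meet transversely in a single unknotted circle $\alpha(\WH_n)$ running through the twist region of Figure~\ref{fig:Whitehead}. The framing on $\alpha(\WH_n)$ inherited from the two disks differs from its Seifert (zero) framing by exactly the $n$ full twists drawn in the clasp box, so $\lnk(\alpha(\WH_n),\alpha(\WH_n)^+)=n$.

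Next, for the band sum $L'$ of $L$ with $\WH_n$, I place $\WH_n$ together with its Seifert surfaces inside a $3$-ball $B$ disjoint from $L$ and its Seifert surfaces, and attach two disjoint untwisted bands $b_1,b_2$ connecting $L_1,L_2$ to the respective components of $\WH_n$ through collars of $\bdry B$. Boundary-connect-summing the Seifert surfaces along these bands produces Seifert surfaces for $L'_1$ and $L'_2$ whose transverse intersection is precisely $\alpha(L)\sqcup\alpha(\WH_n)$, with the two pieces sitting in disjoint $3$-balls. Because self-linking and surface framings are additive across split components (the cross-linking terms vanish automatically), we conclude
$$\overline{\mu}_{1122}(L')=\lnk(\alpha(L),\alpha(L)^+)+\lnk(\alpha(\WH_n),\alpha(\WH_n)^+)=\overline{\mu}_{1122}(L)+n.$$

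The main obstacle is the band-sum additivity step: one must place the bands generically so that the boundary-connect-summed Seifert surfaces remain embedded and the intersection curve really is the disjoint union of the two original intersection curves, with no new arcs or circles introduced by the band attachments. One must also track the orientation conventions for $\WH_n$ and for the pushoff $\alpha^+$ carefully to confirm the sign of the contribution is $+n$ rather than $-n$. Both are standard general-position considerations, but the sign-tracking requires pinning down the Seifert-surface normal orientations once and for all.
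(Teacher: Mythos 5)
The paper itself offers no proof of this proposition: it is stated as an ``elementary property'' with a pointer to \cite{C4}, so there is no argument of the authors' to compare yours against. Your route --- Sato's Seifert-surface formula $\overline{\mu}_{1122}(L)=\lnk(\alpha,\alpha^+)$ for $\alpha=F_1\cap F_2$, a direct computation for $\WH_n$, and additivity over the two split pieces after boundary-connect-summing the surfaces along bands chosen in a collar of the splitting sphere --- is a legitimate way to establish the statement, and the additivity step is fine as you set it up: with the bands disjoint from all four surfaces and from each other, the new surfaces are Seifert surfaces for $L'_1,L'_2$ avoiding the opposite components, their intersection is $\alpha(L)\sqcup\alpha(\WH_n)$ in disjoint balls, and the cross-linking terms vanish. (Note that, as in the paper's own application, the statement should be read for bands of this split type; for completely arbitrary bands the intersection curve acquires extra pieces coming from band-surface intersections and the formula can fail.)

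There is, however, a concrete flaw in your verification that $\overline{\mu}_{1122}(\WH_n)=n$. Sato's formula requires $F_1\cap L_2=\emptyset$ and $F_2\cap L_1=\emptyset$ (this is exactly what forces $\alpha$ to be closed), and for $n\neq 0$ you cannot take both surfaces to be disks with this property: if one component bounded an embedded disk disjoint from the other component, the link would be split, contradicting $\overline{\mu}_{1122}(\WH_n)\neq 0$. Concretely, the obvious disk bounded by either component of $\WH_n$ meets the other component in two points of opposite sign, so two disks intersect in clasp arcs, not in a single circle, and the formula does not apply to that configuration. The repair is standard: keep a suitable Seifert surface for one component and replace the disk for the other by the disk tubed along the clasping component to cancel its two intersection points (a genus-one surface disjoint from the other component); the two surfaces then meet in a single circle running through the twist box of Figure \ref{fig:Whitehead}, whose surface framing differs from the zero framing by the $n$ full twists, giving $\pm n$ with the sign fixed by the figure's conventions. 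With that step corrected (or with the well-known value of the Sato--Levine invariant of $\WH_n$ simply cited), your argument goes through.
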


For an $n$-component link $L=L_1\cup \dots\cup L_n$, $\overline{\mu}_{iijj}(L) = \overline{\mu}_{1122}(L_i, L_j)$ is the Sato-Levine invariant of the 2-component sublink $L_i, L_j$.  

According to \cite[Theorem 1]{MartinThesis}, the Arf-invaraint, the Sato-Levine invariant and the triple linking number are a complete set of obstructions to a pair of links being related by a sequence of band pass moves, as in Figure~\ref{fig:BPSurgery}.  

\begin{figure}
\begin{picture}(350,100)
\put(0,5){\includegraphics[height=8em]{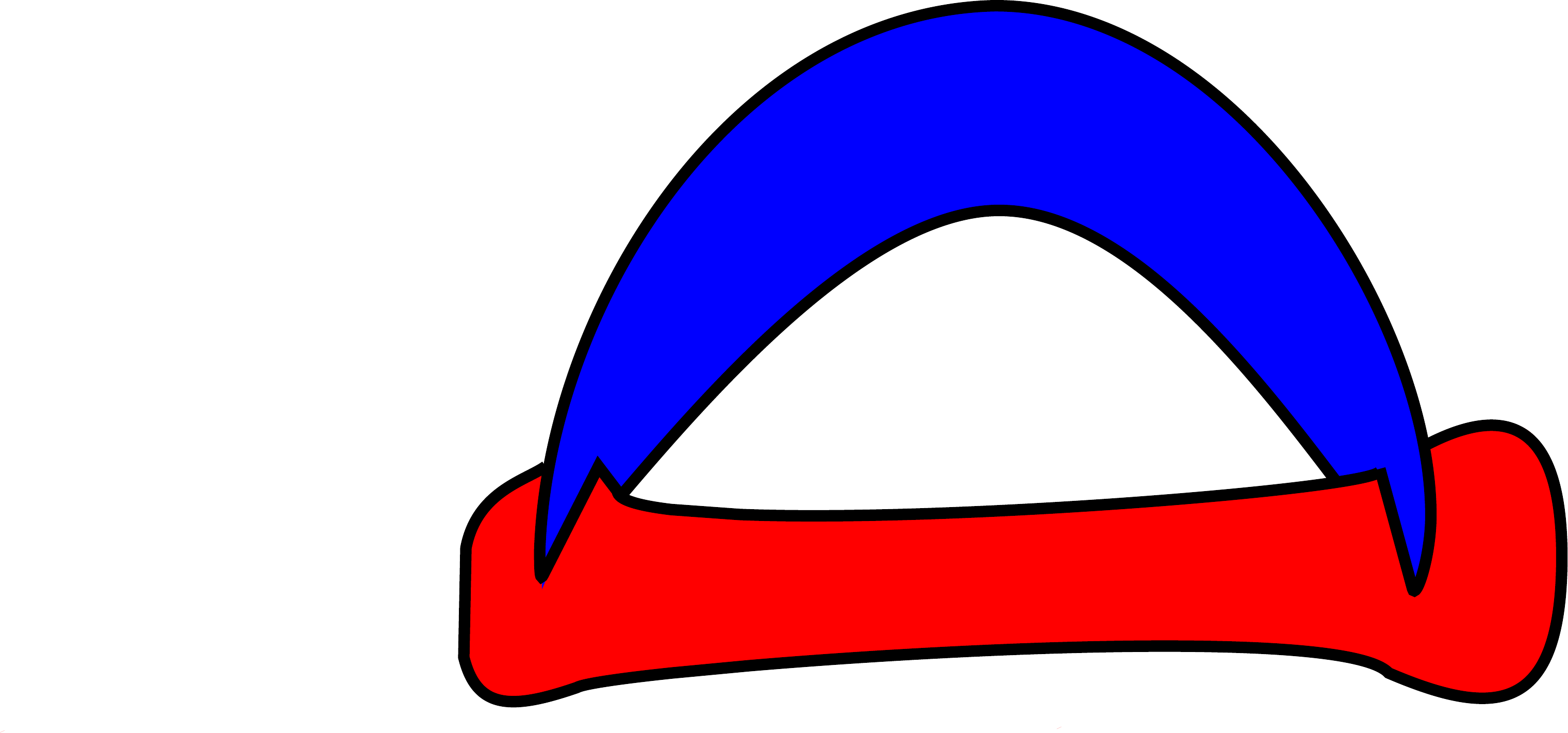}}
\put(290,25){$\dots$}
\put(270,0){$n$ full twists}
\put(180,5){\includegraphics[height=8em]{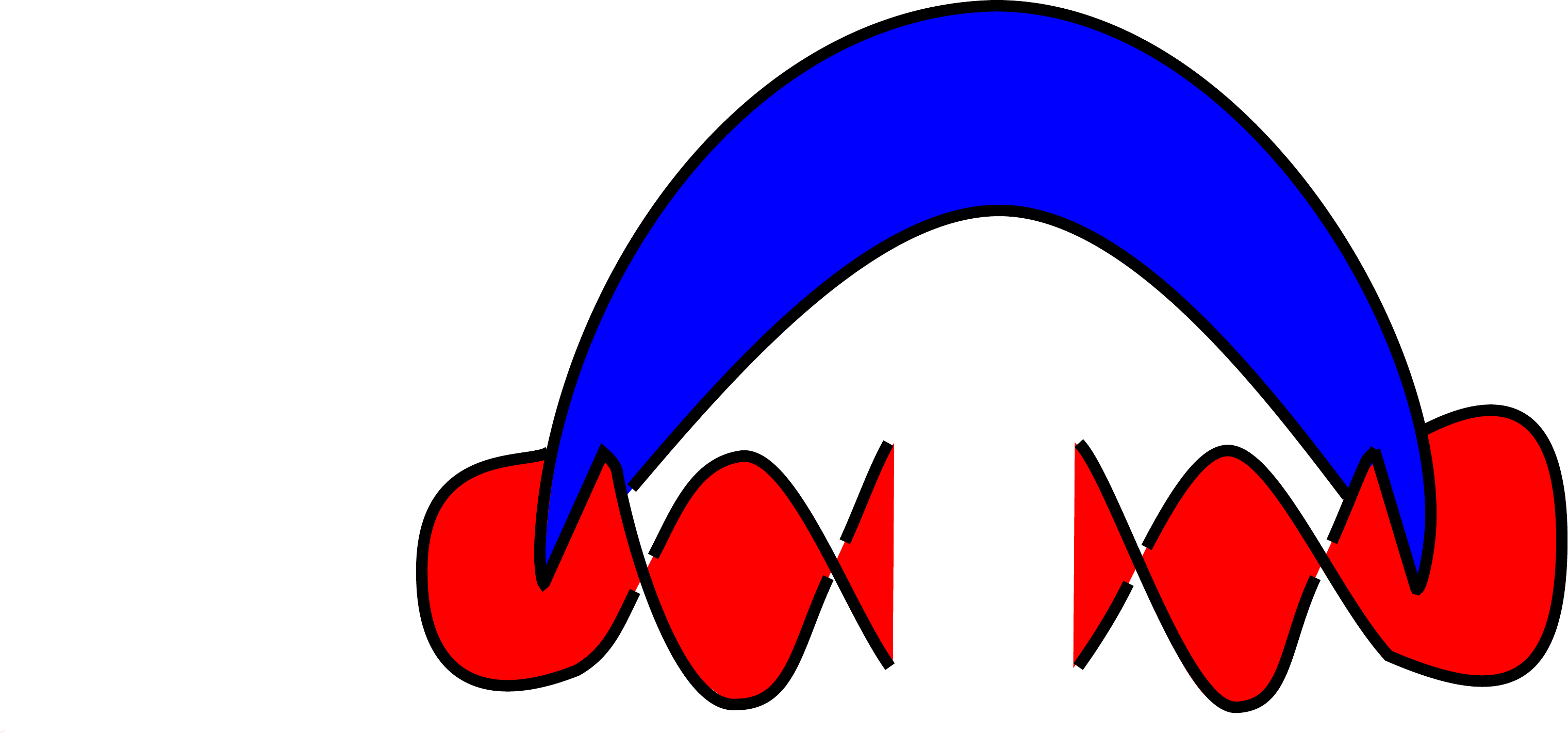}}
\end{picture}
\caption{Left:  A C-complex for the 2-component unlink.  Right: An equivalent C-complex for the $n$-twisted Whitehead link.  }
\label{fig:WhiteheadCplx}
\end{figure}

\section{The proof of Theorem \ref{thm:main}. }\label{sect: many components}

We close this paper with the proof of Theorem \ref{thm:main}.

\begin{reptheorem}{thm:main}
Let $L$ and $J$ be $n$-component links with vanishing pairwise linking numbers.  Then the following are equivalent
\begin{enumerate}
\item For all $1\le i<j<k\le n$, $\overline{\mu}_{ijk}(L)=\overline{\mu}_{ijk}(J)$.
\item $L$ and $J$ admit equivalent C-complexes.
\item There exist curves $\gamma_1,\dots,\gamma_k$ disjoint from $L$ such that $\lnk(L_i,\gamma_j)=0$ for all $i,j$ and such that $J$ is obtained from $L$ after performing some surgery on $\gamma_1,\dots,\gamma_k$.
\end{enumerate}
\end{reptheorem}

\begin{proof}
The claim that  (2) implies (1) is the content of Proposition~\ref{Prop:mu123}.  We begin by showing that (3) implies (2).  Let $L=L_1,\dots, L_n$ and $J=J_1,\dots, J_n$ be $n$-component links.  Suppose that there exist curves $\gamma=\gamma_1,\dots,\gamma_k$   in the complement of $L$, for which $\lnk(L_i,\gamma_j)=0$  and a sequence of rational numbers $p_1/q_1,\dots p_k/q_k$ such that $J$ is obtained by modifying $L$ by $p_j/q_j$ surgery along $\gamma_j$ for all $1\le j \le k$.

Let $F$ be a C-complex for $L$.  Consider any component $F_i$ of $F$ and $\gamma_j$ of $\gamma$.  Since $\lnk(L_i, \gamma_j)=0$ by assumption, either $\gamma_j$ is disjoint from $F_i$ or there exists some pair of intersection points $p,q\in F_i\cap \gamma_j$ with opposite sign such that the arc $\alpha\subseteq \gamma_j$ running from $p$ to $q$ is disjoint from $F_i$. We proceed to modify $F_i$ using a tube following $\alpha$ from $p$ to $q$.   See Figure~\ref{fig: stabilize to disjoint}.  This modification may introduce some simple closed curves in the intersection $F_i\cap F_\ell$ for some $\ell\neq i$.  The resulting collection of surfaces is no longer a C-complex.  By pushing $F_i$ and $F_\ell$ along some arcs as in Figure \ref{fig: fix double loop}, we replace each intersection circle with a pair of clasps.  In doing so we merely isotope $L_i$ and $L_\ell$.  

\begin{figure}[b]
\begin{picture}(190,105)
\put(0,5){\includegraphics[height=8em]{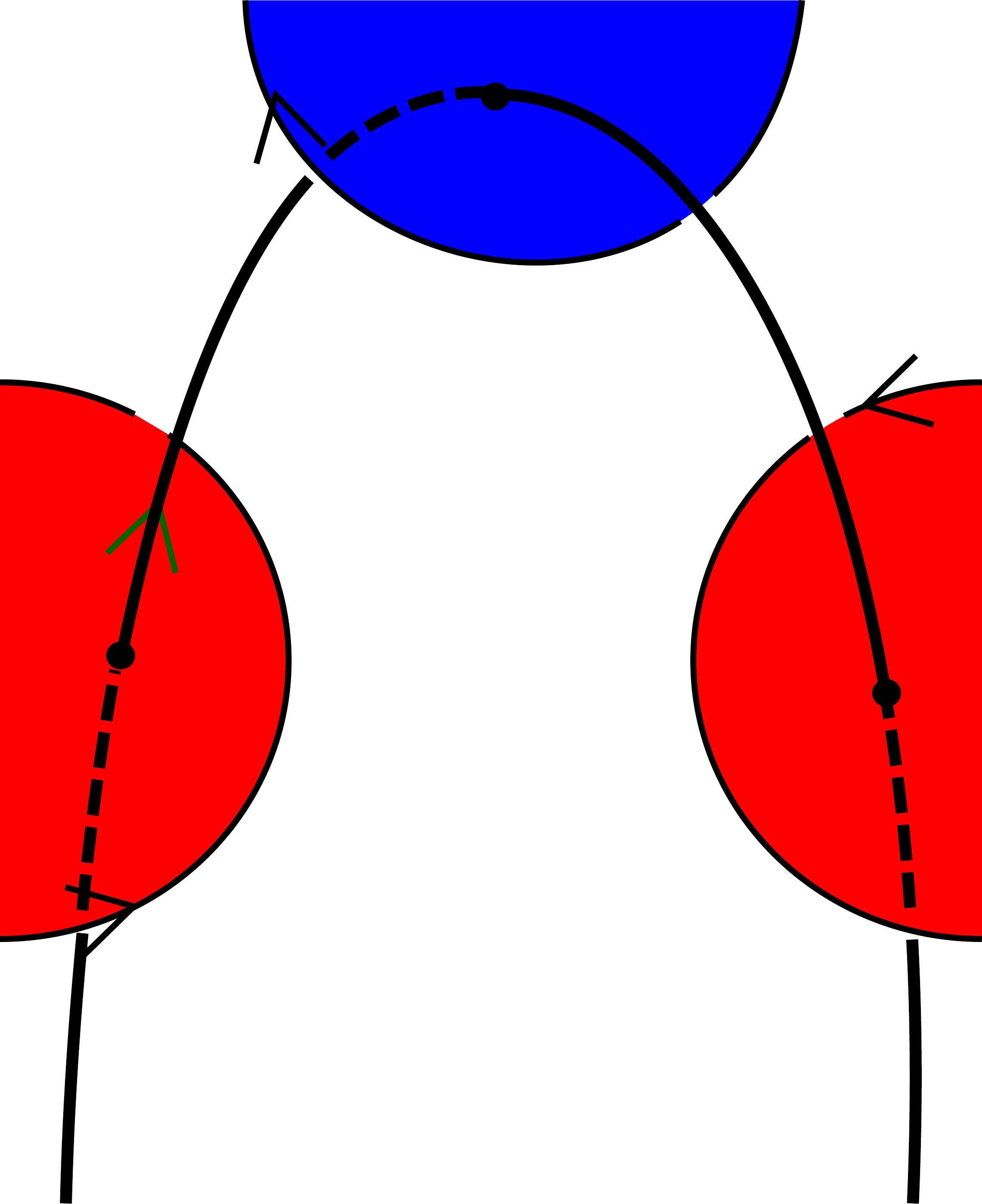}}
\put(6,8){$\gamma_j$}
\put(-13,40){$F_i$}
\put(35,98){$F_\ell$}
\put(30,0){(a)}
\put(120,5){\includegraphics[height=8em]{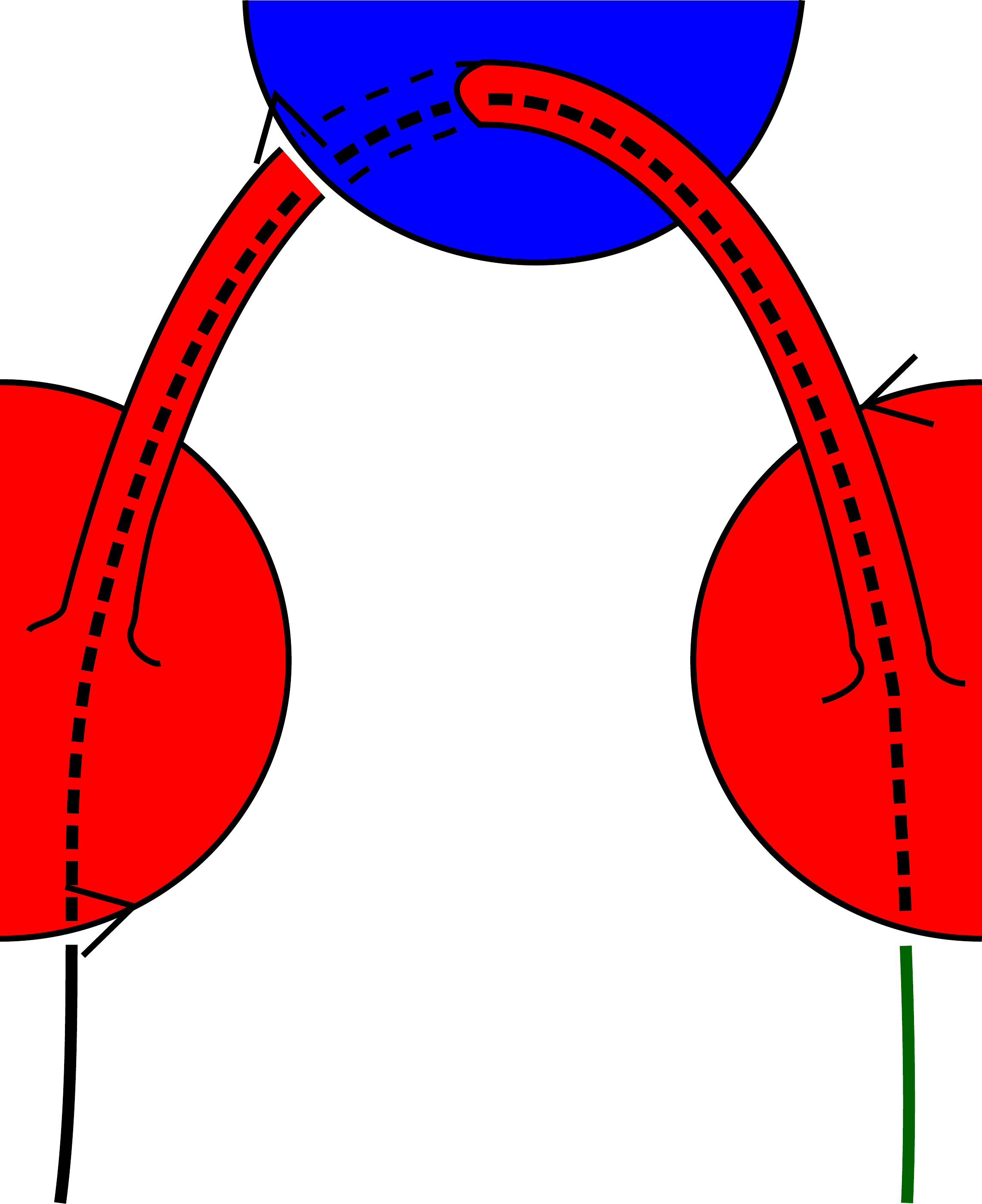}}
\put(126,8){$\gamma_j$}
\put(107,40){$F_i$}
\put(155,98){$F_\ell$}
\put(150,0){(b)}
\end{picture}
\caption{(a) $\gamma_j$ intersects of $F_i$ in two points with opposite sign.  (b) Stabilizing $F_i$ removes these intersections, but adds a simple closed curve in $F_i\cap F_\ell$.}
\label{fig: stabilize to disjoint}
\end{figure}

\begin{figure}
\begin{picture}(330,105)
\put(0,10){\includegraphics[height=8em]{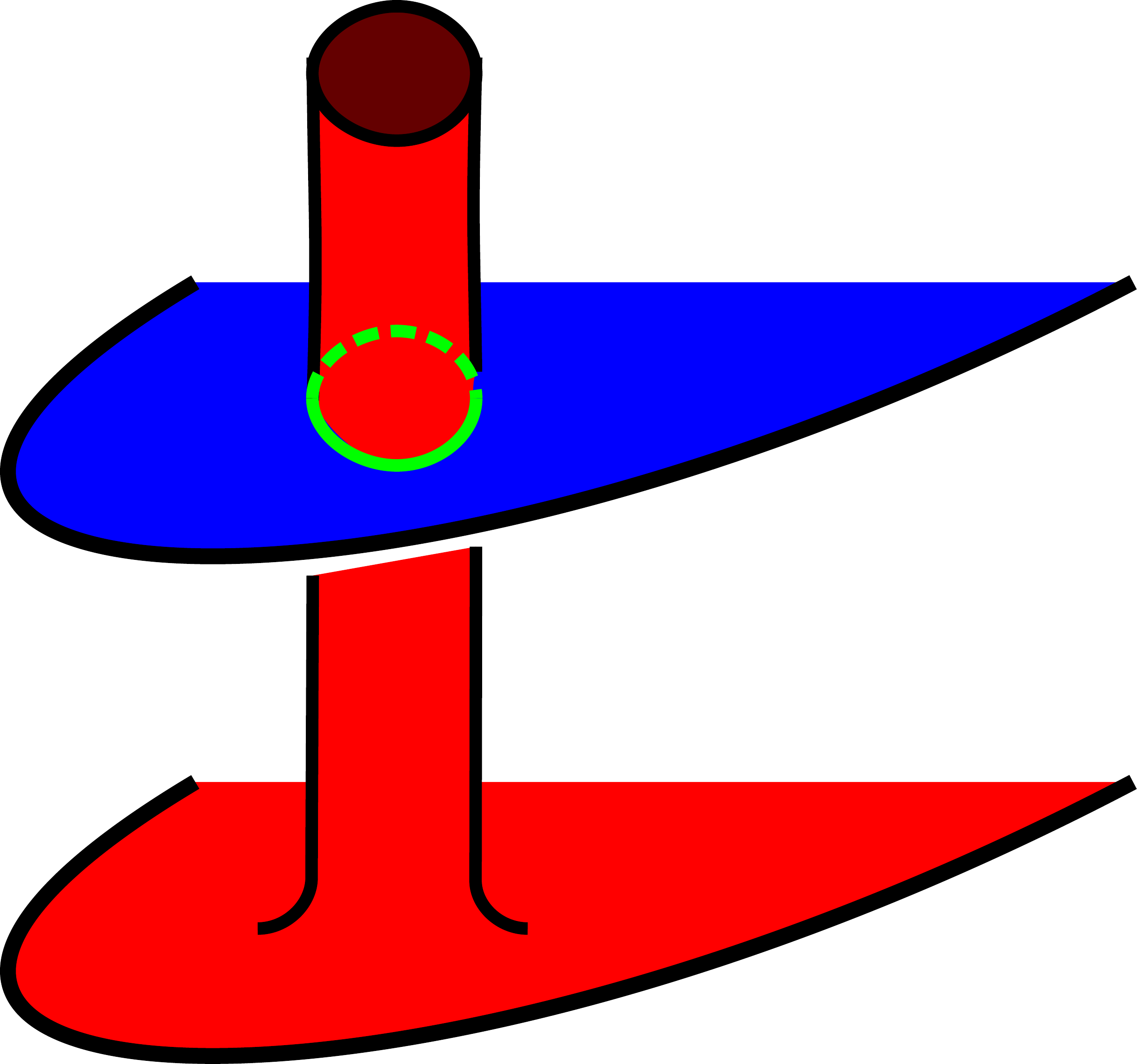}}
\put(-5,28){$F_i$}
\put(-5,70){$F_j$}

\put(30,0){(a)}
\put(120,10){\includegraphics[height=8em]{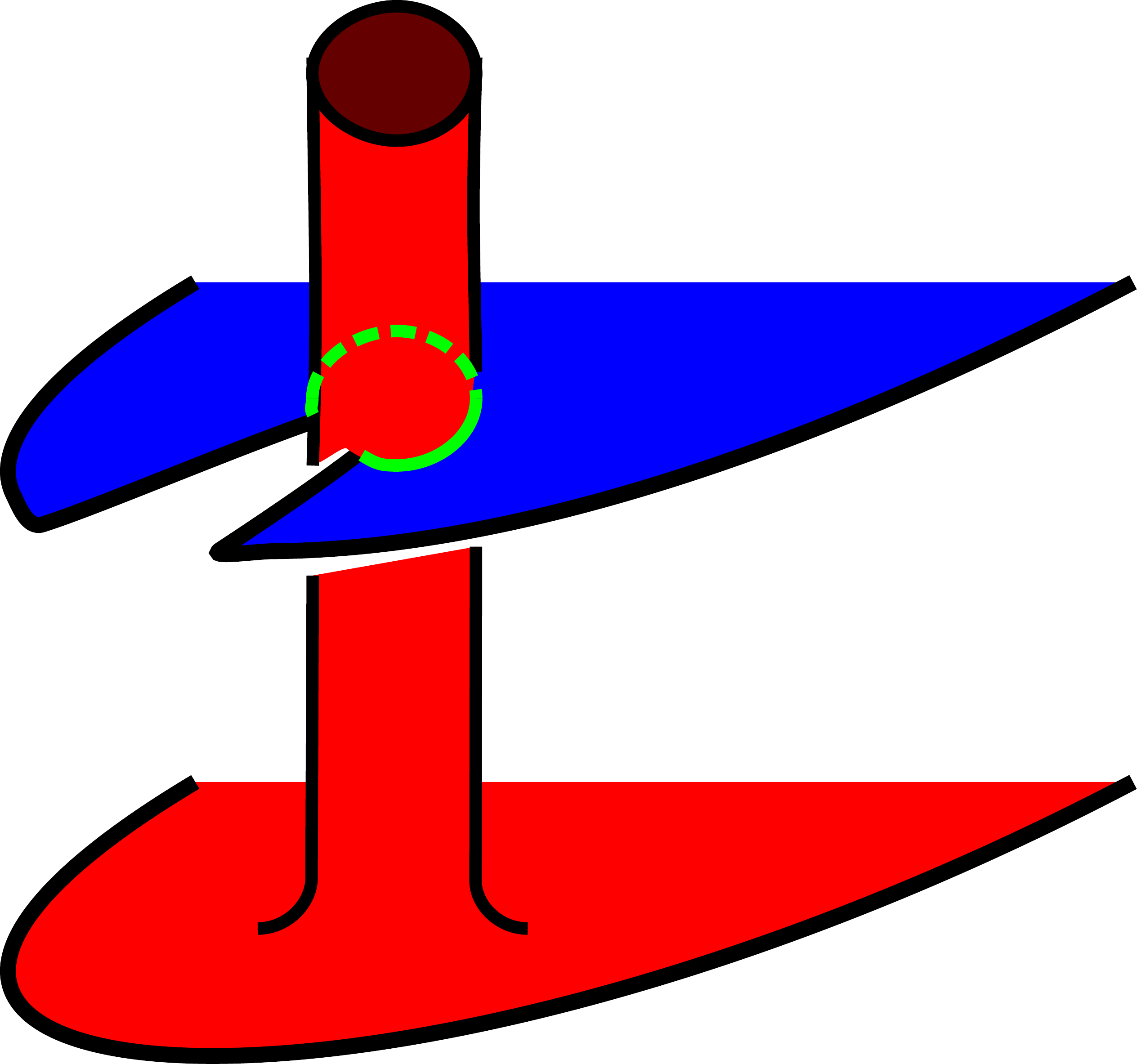}}
\put(115,28){$F_i$}
\put(115,70){$F_j$}

\put(130,0){(b)}
\put(240,10){\includegraphics[height=8em]{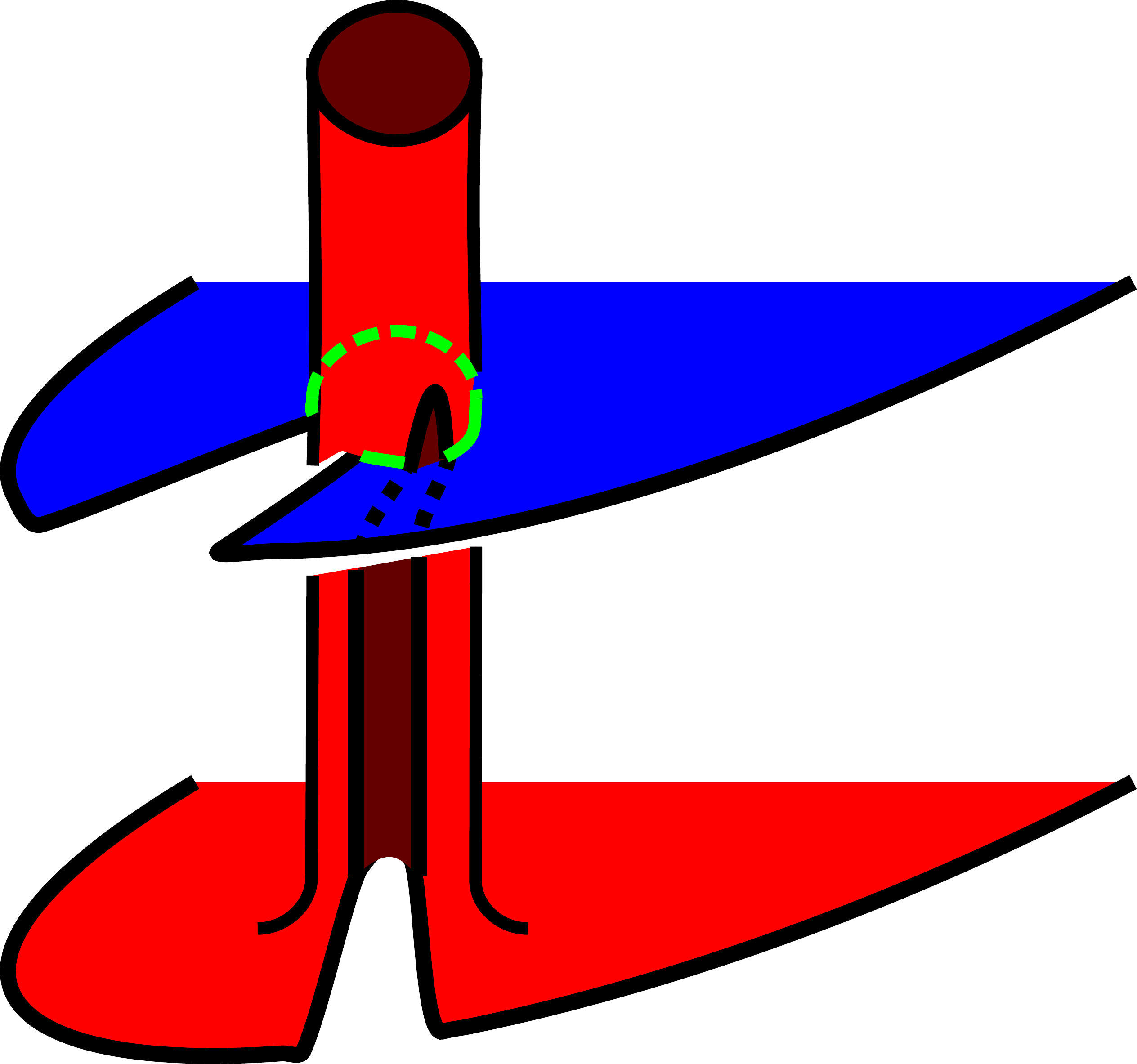}}
\put(235,28){$F_i$}
\put(235,70){$F_j$}

\put(230,0){(c)}
\end{picture}
\caption{(a) A double loop.  The darker red here indicates the opposite side of $F_j$  (b) A ``finger move'' replaces this loop with an arc with both endpoints on $L_j$. (c)  Another ``finger move'' reduces the arc to two clasps.}
\label{fig: fix double loop}
\end{figure}
 
By assumption, the link $J$ is obtained by starting with $L$, removing neighborhoods of $\gamma_1,\dots, \gamma_k$ from $S^3$, and gluing back in solid tori $V_1, \dots, V_k$ such so that the curve given by $p_i$ longitudes of $\gamma_i$ and $q_i$ meridians bounds the meridional disk for $V_i$.  Since $F$ is disjoint from $\gamma$, this cut and paste process preserves the equivalence class of $F$.  We have produced a C-complex for $J$ equivalent to a C-complex for $L$, as required by the theorem.

It remains to prove that (1) implies (3).  Suppose that for all $1\le i<j<k\le n$, $\overline{\mu}_{ijk}(L)=\overline{\mu}_{ijk}(J)$.  For $i=1,\dots, n$ if $\Arf(L_i)\neq \Arf(J_i)$ then modify $L_i$ by band summing with the trefoil knot.  Call the resulting link $L^0$.  By Proposition~\ref{Arf},  $\Arf(L_i^0) = \Arf(L_i)+1=\Arf(J_i)$, since the Arf invariant takes values in $\Z/2$.  Notice that as in Figure~\ref{fig:Trefoil} each band sum with the trefoil may be obtained by first band summing with the unknot (which does not change the link type of $L$) and then performing $-1$ surgery along two curves each of which have zero linking number with every component of $L$.  

Next, for all $1\le i\le j\le n$, if $\overline\mu_{iijj}(L^0)\neq \overline\mu_{iijj}(J)$ then we band sum the components $L_i^0$ and $L_j^0$ of $L^0$ with the $m_{ij}$ twisted Whitehead link where $m_{ij}=\overline\mu_{iijj}(J) - \overline\mu_{iijj}(L^0)$.  Call the resulting link $L^1$.  By Proposition~\ref{Mu}, 
$
\overline{\mu}_{iijj}(L^1) = \overline{\mu}_{iijj}(L^0) + m_{ij} = 
\overline{\mu}_{iijj}(J).
$
This band sum can be obtained by first band summing with the 2-component unlink (which does not change the link type of  $L^0$) and then performing $\frac{-1}{m_{ij}}$ surgery along a single curve as in Figure~\ref{fig:Whitehead} which has zero linking numbers with all components of $L^0$.

By design, we now  have that for all $i,j,k$, $\overline{\mu}_{ijk}(L^1) =\overline{\mu}_{ijk}(L) = \overline{\mu}_{ijk}(J)$, $\Arf(L_i^1)=\Arf(J_i)$ and $\overline\mu_{iijj}(L^1) = \overline{\mu}_{iijj}(J)$.  According to \cite[Theorem 1]{MartinThesis},  $L^1$ and $J$ are be related by a sequence of band pass moves, depicted in Figure~\ref{fig:BPSurgery}. Thus, there exist a collection of curves each of which has zero linking number with every component of $L^1$ such that $J$ is the result of  modifying $L^1$ via $0$-surgery along these curves.  

Let $\gamma$ be the collection of all of the curves of the preceding three paragraphs.  We see that $J$ can be obtained by modifying $L$ by surgery along these curves each of which has zero linking number with every component of $L$.  This completes the proof.
\end{proof}

\bibliographystyle{plain}
\bibliography{biblio}  

\end{document}